\newtheorem{example}{Example}
\newtheorem{remark}{Remark}
\newtheorem{definition}{Definition}
\newtheorem{theorem}{Theorem}
\newtheorem{lemma}{Lemma}
\begin{document}


\title{High order approximation to Caputo derivative on graded mesh and time-fractional diffusion equation for non-smooth solutions}


\author[1]{Shweta Kumari}
\affil{Deptartment of Mathematics\\ Indian Institute of  Technology Delhi, India\\
\texttt{shwetakri8062@gmail.com, assinghabhi@gmail.com, vaibhavmehandiratta@gmail.com, mmehra@maths.iitd.ac.in}}
 
\author[1]{Abhishek Kumar Singh}
\author[1]{Vaibhav Mehandiratta}
\author[1]{Mani Mehra}

\maketitle

\begin{abstract}
In this paper, a high-order approximation to Caputo-type time-fractional diffusion equations involving an initial-time singularity of the solution is proposed. At first, we employ a numerical algorithm based on the Lagrange polynomial interpolation to approximate the Caputo derivative on the non-uniform mesh. Then truncation error rate and the optimal grading constant of the approximation on a graded mesh are obtained as $\min\{4-\alpha,r\alpha\}$ and $\frac{4-\alpha}{\alpha}$, respectively, where $\alpha\in(0,1)$ is the order of fractional derivative and $r\geq 1$ is the mesh grading parameter. Using this new approximation, a difference scheme for the Caputo-type time-fractional diffusion equation on graded temporal mesh is formulated. The scheme proves to be uniquely solvable for general $r$. Then we derive the unconditional stability of the scheme on uniform mesh. The convergence of the scheme, in particular for $r=1$, is analyzed for non-smooth solutions and concluded for smooth solutions. Finally, the accuracy of the scheme is verified by analyzing the error through a few numerical examples.
\end{abstract}

\noindent
\textbf{Keywords.}
Time-fractional diffusion equation, Caputo derivative, non-smooth solution, graded mesh, stability.

\numberwithin{equation}{section}
\section{\textbf{Introduction}}\noindent
The classical integer-order differential equations are incapable of framing several daily life occurrences due to their complex structure and dependency of the physical system on previous impressions. However, the fractional operators are known mainly for their nonlocal behavior and past memory storage. Hence, the involvement of differential equations with fractional order, namely the fractional differential equations (FDEs), is crucial in molding such phenomena. They can be particularised by generalizing the order of the integer-order derivative to some arbitrary value. Although both calculus and fractional calculus trace their roots back to the same time period, i.e. the late 17th century, the latter did not receive ample attention as no practical usage of it was apparent at the time. But in the past few years, the study of FDEs upsurged due to their wide applications in the fields of science and engineering \cite{kilbas2006theory,podlubny1998fractional}. FDEs are generally used to describe the dynamic systems that represent many natural processes in physics~\cite{hilfer2000fractional}, fluid mechanics~\cite{mainardi1997model}, biology~\cite{iomin2004tumor,singh2023modified}, system control~\cite{mehandiratta2021optimal,mehandiratta2021fractional}, wavelet~\cite{Nitin,singh2021wavelet}, network like structures~\cite{MEHANDIRATTA2020152}, finance~\cite{patel2020fourth,singh2020uncertainty}  and various other engineering fields~\cite{kumar2021collocation,singh2023learning,shukla2020fast}. Among fractional partial differential equations (FPDEs), fractional diffusion equations are attracting strong interests because of their ability to well specify anomalous diffusion processes~\cite{luchko2012anomalous} in complex media since classical diffusion is not the best description for them.\par
This paper is concerned with developing a numerical scheme for the Caputo-type time-fractional diffusion equation (TFDE) for the following initial-boundary value problem (IBVP):
\begin{equation}\label{diff_eqn}
\textsubscript{C}D_{0,t}^{\alpha}u(x,t)- \varrho\partial_x^{2}u(x,t)=f(x,t),~~ 0<t\leq T,~0<x<L,\\
\end{equation}
\begin{equation}\label{bdry_eqn}
u(0,t) = u(L,t) = 0,~~~0 < t \leq T,\\
\end{equation}
\begin{equation}\label{ini_eqn}
u(x,0) = \varphi(x),~~~0\leq x\leq L,
\end{equation}
where $\textsubscript{C}D_{0,t}^{\alpha}$ denotes the left Caputo fractional derivative of order $\alpha \in (0,1)$, with respect to time variable $t$, $\partial_x^2$ is the spatial Laplacian operator, \eqref{bdry_eqn} denotes the Dirichlet-type boundary conditions, and \eqref{ini_eqn} denotes the initial condition. The notation $\varrho$ $(>0)$ is a constant diffusion coefficient. Here, $f \in C(\bar \Omega)$ and $ \varphi \in C[0,L]$, where $\bar \Omega=[0,L]\times [0,T]$. \par
Recently, intense focus has been placed on the approximation to Caputo derivative as it models phenomena with productive memory functions that take account of past interactions and problems with nonlocal properties \cite{LI20113352}. The classic $L1$ format to approximate the Caputo derivative on a uniform grid is derived by Lin and Xu in \cite{LIN20071533}, where the integrand of the Caputo derivative is approximated by a linear interpolation polynomial in each time subinterval. For the subdiffusion equation with non-smooth data, the $L1$ scheme was analysed recently in \cite{jin2016analysis}. Gao et al.~\cite{GAO201433} proposed the $L1-2$ format for Caputo fractional derivative on a uniform grid, i.e. a linear and quadratic interpolation polynomial is used to discretize the integrand in first and other time subintervals, respectively, with a convergence order of $(3-\alpha)$. Other works for higher order numerical algorithms having $(3-\alpha)$ convergence rate are presented in \cite{li2014high,alikhanov2015new}. In \cite{li2014high}, an approximation of order $(3-\alpha)$ was derived for the Caputo-type advection-diffusion equation. Alikhanov \cite{alikhanov2015new} gave the $L2-1_\sigma$ format to approximate the fractional derivatives on uniform grid, where $\sigma=1-\frac{\alpha}{2}$ is the offset parameter. A scheme of order $(4-\alpha)$ is developed by Cao et al.~\cite{Cao_2015} for time fractional advection-diffusion equations having smooth solutions throughout the time interval on a uniform mesh. \par
Many researchers have obtained the numerical solution of the TFDE \eqref{diff_eqn}-\eqref{ini_eqn} with the assumption that the solution and its consecutive derivatives satisfy certain smooth conditions. Nevertheless, Stynes et al.~\cite{Stynes_SIAM} were the first to observe that the solution of the problem \eqref{diff_eqn}-\eqref{ini_eqn} usually has an initial-time singular behavior. In that case, however, the numerical schemes proposed over the uniform grid converge, but at the cost of a significantly reduced error convergence rate. So, they presented their work~\cite{Stynes_SIAM} for the time-fractional reaction-diffusion problem to deal with this initial-time singularity on a non-uniform grid with the ideal convergence order of $\min\{2-\alpha,r\alpha\}$, where $\alpha \in (0,1)$ is the fractional order and $r\geq 1$ is the mesh grading parameter. They used the $L1$ format to discretize the Caputo fractional derivative on graded temporal meshes, and the diffusion term is discretized by the central difference approximation on a uniform spatial grid. Following a similar path, Qiao and Cheng~\cite{qiao2022fast} successfully got a truncation error of order $\min\{3-\alpha,r\alpha\}$ using the $L1-2$ type format to discretize the Caputo derivative on the graded mesh. But the examination of the stability and convergence of the scheme was missing.\par
Taking the aforementioned works as motivation, we attempt to develop an even higher order approximation of the Caputo derivative on graded mesh and use it to construct a difference scheme to TFDE \eqref{diff_eqn} having non-smooth solutions. To achieve this, we apply a numerical algorithm to discretize the Caputo derivative on graded temporal mesh in which the linear, quadratic, and cubic interpolation polynomials are used to approximate the integrand of the Caputo derivative in the first, second, and third onwards time subintervals, respectively. The diffusion term is discretized by second order central difference scheme on a uniform spatial grid. This algorithm requires the solution to be at least fourth-order continuously differentiable in time. However, solutions usually do not satisfy such high regularity assumptions, which results in a low error convergence rate of the scheme. Hence, the grading of mesh is considered so that the initial-time singularity does not affect the overall convergence of the scheme.
Surprisingly, there was a lack of literature on the convergence analysis of the scheme for TFDEs with smooth solutions as well. So, we also investigate the convergence order for TFDEs with smooth solutions on a uniform mesh.\par
The rest organization of this paper goes as follows: Section~\ref{preliminaries} contains the preliminary content required for the reader to understand the concepts of this paper. The higher order approximation of Caputo derivative on graded mesh has been constructed and thoroughly analyzed in Section~\ref{derivation}. It also comprises the estimation of the local truncation error of the approximation. Section~\ref{scheme} builds a numerical scheme for the TFDE problem \eqref{diff_eqn}-\eqref{ini_eqn} using the approximation developed in Section~\ref{derivation}. Further, we check the uniqueness of the solution of the scheme. We dedicate a subsection to study the unconditional stability and convergence of the scheme developed for $r=1$, i.e. on uniform mesh and for both non-smooth as well as smooth solutions. In Section~\ref{results}, a few numerical examples are displayed to verify the accuracy and efficiency of the scheme by studying the errors. Finally, Section~\ref{conclusion} concludes the paper.

\section{Preliminaries}
\label{preliminaries}\noindent 
In this part, we present some fundamental concepts that are prerequisites for a better understanding of our work in this paper. 

\begin{definition}\label{gen_caputo}
The left Caputo derivative with fractional order $\alpha > 0$ of the function $f(t),\\ f \in C^m(a,b)$, is defined as \cite{li2015numerical}
\begin{equation}
\textsubscript{C}D_{a,t}^{\alpha}f(t) = \frac{1}{\Gamma(m-\alpha)}\int_{a}^{t}(t-s)^{m-\alpha-1}f^{(m)}(s)ds,
\end{equation}
where $m \in  \mathbb{N}$ satisfying $m-1 < \alpha \leq  m$ \ and $\Gamma(.)$ is the Euler's gamma function .
\end{definition}
\begin{definition}
Euler's gamma function is denoted and defined as
\begin{equation*}
    \Gamma(z)=\int_0^\infty x^{z-1} e^{-x}dx,~~
  z \in \mathbb{C},~\Re(z)>0 .
\end{equation*}
Here $\Gamma(z+1)=z\Gamma(z)$, and in particular,
 $\Gamma(z)=(z-1)!$ \ for $z \in \mathbb{Z}^+$.
\end{definition}
\begin{definition}
The linear interpolation of a function $f(t)$ on some interval $[t_{k-1},t_k],~k\geq 1,$ is defined as
\begin{equation}\label{lin_int}
I_{1k}f(t) = \frac{t-t_k}{t_{k-1}-t_k}f(t_{k-1})+\frac{t-t_{k-1}}{t_k-t_{k-1}}f(t_k).
\end{equation}
The quadratic interpolation of a function $f(t)$ on some interval $[t_{k-1},t_k],~k\geq 2,$ is defined as
\begin{equation}\label{quad_int}
\begin{aligned}
I_{2k}f(t) &= \frac{(t-t_{k-1})(t-t_k)}{(t_{k-2}-t_{k-1})(t_{k-2}-t_k)}f(t_{k-2})+\frac{(t-t_{k-2})(t-t_k)}{(t_{k-1}-t_{k-2})(t_{k-1}-t_k)}f(t_{k-1})\\&\hspace{.5cm}+\frac{(t-t_{k-2})(t-t_{k-1})}{(t_{k}-t_{k-2})(t_{k}-t_{k-1})}f(t_k),
\end{aligned}
\end{equation}
and the cubic interpolation of a function $f(t)$ on some interval $[t_{k-1},t_k],~k\geq 3,$ is constructed in the following form
\begin{equation}\label{cub_int}
I_{3k}f(t) = \sum_{l=0}^{3}{f(t_{k-l})}\prod_{i=0,i\neq l}^{3}\frac{t-t_{k-i}}{t_{k-l}-t_{k-i}}.
\end{equation}
Thus, we have the interpolation error from \cite{qiao2022fast} as
\begin{equation}\label{int_error}
f(t)-I_{3k}f(t)=\frac{f^{(iv)}(\xi_k)}{12}(t-t_{k-3})(t-t_{k-2})(t-t_{k-1})(t-t_{k}),
\end{equation}
where $\xi_k \in (t_{k-3},t_k),~k\geq3$.
\end{definition}
\begin{definition}
The Supremum norm of a function $f(x)$, $x \in X$, is defined as
\begin{equation*}
    \|f\|_\infty=\sup_{x\in X}|f(x)|.
\end{equation*}
Then for any mesh function ${u_i^n}=u(x_i,t_n)$, $(x_i,t_n)\in \bar\Omega_{h}\times\bar\Omega_{\tau}$,
\begin{equation*}
\|u^n\|_\infty = \max_{0\leq i\leq K_x}|u_i^n| \ \text{ and } \
\|u\|_\infty = \max_{0\leq n\leq K_t}\|u^n\|_\infty.
\end{equation*}
\end{definition}
\section{Derivation and analysis of the higher order approximation to Caputo derivative on graded mesh}\label{derivation}
\noindent
To approximate the solution of TFDE \eqref{diff_eqn}-\eqref{ini_eqn}, the Caputo derivative is discretized on graded mesh, and the diffusion term is discretized on a uniform spatial grid. For graded temporal discretization, we discretize the time domain $[0,T]$ into $K_{t}$ subintervals with $t_{0}=0 < t_{1}< \cdots < t_{K_{t}} = T$. From here onwards, we denote $\mathbb{Z}_{[0,M]}:=\{z \in \mathbb{Z}$, $0\leq z\leq M\}$. We define $\tau_n={t_n}-t_{n-1}$ as the grid size in time direction with $t_{n} = T{\left(\frac{n}{K_{t}}\right)}^r$, where $n \in \mathbb{Z}_{[0,K_t]}$ and $r\geq1$ is the mesh grading parameter. Meanwhile, the spatial interval $[0,L]$ is divided into $K_{x}$ equal subintervals of uniform length $h = \frac{L}{K_{x}}$, where $K_{x} \in \mathbb{Z^+}$. Thus, the temporal and spatial meshes are given by 
$$\bar\omega_{\tau} = \{t_{n} : t_{n} =  T{\Big(\frac{n}{K_{t}}\Big)}^r,n \in \mathbb{Z}_{[0,K_t]}\},~\text{and}~\bar\omega_{h} = \{x_{i} : x_{i} = ih, i \in \mathbb{Z}_{[0,K_{x}]}\},$$
respectively. Then the domain $\bar\Omega = [0,L]\times [0,T]$ is covered by $\bar\omega_{h\tau} := \bar\omega_{h}\times \bar\omega_{\tau}$. We define the following grid functions as
\begin{equation*}
u(x_{i},t_{n})=U_{i}^{n} \text{ and } f(x_{i},t_{n})=f_{i}^{n}.
\end{equation*}
 Moreover, we use the second-order central finite difference scheme to approximate the diffusion term in problem \eqref{diff_eqn} as
\begin{equation}\label{space_eqn}
\begin{aligned}
{\partial_{x}^{2}}u(x_i,t_n) &= \frac{U_{i+1}^{n}-2U_{i}^{n}+U_{i-1}^{n}}{h^2}+\mathcal{O}(h^2),\\
&= \delta_x^2U_i^n+\mathcal{O}(h^2).
\end{aligned}
\end{equation}
From \eqref{gen_caputo}, the Caputo time-fractional derivative of a function $u(x,t)$ at mesh point $(x_i,t_n)$ and order $\alpha\in(0,1)$ is denoted and defined as
\begin{align}\notag
\textsubscript{C}D_{0,t}^{\alpha}u(x_i,t_n) &= \frac{1}{\Gamma(1-\alpha)}\int_{0}^{t_{n}}(t_{n}-s)^{-\alpha}{\frac{\partial u(x_i,s)}{\partial {s}}}ds, \\\label{caputo}
&= \frac{1}{\Gamma(1-\alpha)}\sum_{k=1}^{n}\int_{t_{k-1}}^{t_{k}}(t_{n}-s)^{-\alpha}{\frac{\partial u(x_i,s)}{\partial {s}}}ds.
\end{align}
\subsection{\bf{Approximation to Caputo fractional derivative}}\noindent
For the foremost subinterval $[t_0,t_1]$, we apply linear interpolation \eqref{lin_int} to approximate \eqref{caputo}, thus we get
\begin{align}\notag
\frac{1}{\Gamma(1-\alpha)}{\int_{t_0}^{t_1}}({{t_n}-s})^{-\alpha}{\frac{\partial{u}(x_i,s)}{\partial {s}}}ds 
& \approx \frac{1}{\Gamma(1-\alpha)}{\int_{t_0}^{t_1}}({{t_n}-s})^{-\alpha}{\frac{\partial{I_{11}u}(x_i,s)}{\partial {s}}}ds,\\\notag
& = \frac{U_i^1-U_i^0}{\tau_1 \Gamma(1-\alpha)}{\int_{t_0}^{t_1}}{({t_n}-s)}^{-\alpha}ds,\\\label{e2}
& = \frac{a_n}{\Gamma(2-\alpha)}\big[U_i^1-U_i^0\big],
\end{align}
where \ $a_n = {\frac{1}{\tau_1}}\left[({t_n}-{t_0})^{1-\alpha}-({t_n}-{t_1})^{1-\alpha}\right].$
\par
\bigskip
Now for the second subinterval $[t_1,t_2]$, using quadratic interpolation $\eqref{quad_int}$, we approximate \eqref{caputo} as follows
\begin{equation}\label{e3}
\begin{aligned}
\frac{1}{\Gamma(1-\alpha)}{\int_{t_1}^{t_2}}({{t_n}-s})^{-\alpha}{\frac{\partial u(x_i,s)}{\partial {s}}}ds 
&\approx \frac{1}{\Gamma(1-\alpha)}{\int_{t_1}^{t_2}}({{t_n}-s})^{-\alpha}{\frac{\partial{I_{22}u}(x_i,s)}{\partial{s}}}ds,\\
&=\frac{1}{\Gamma(2-\alpha)}\big[b_n{U_i^0}+c_n{U_i^1}+d_n{U_i^2}\big],
\end{aligned}
\end{equation}
where 
\begin{equation*}
\begin{aligned}
b_n =& {\frac{1}{\tau_1({\tau_1}+{\tau_2})}}\bigg[{\frac{2}{2-\alpha}}\big\{({t_n}-{t_1})^{2-\alpha}-({t_n}-{t_2})^{2-\alpha}\big\}-\tau_2\big\{({t_n}-{t_1})^{1-\alpha}+({t_n}-{t_2})^{1-\alpha}\big\}\bigg],\\
c_n =& -{\frac{1}{{\tau_1}{\tau_2}}}\bigg[{\frac{2}{2-\alpha}}\big\{({t_n}-{t_1})^{2-\alpha}-({t_n}-{t_2})^{2-\alpha}\big\}+({\tau_1}-{\tau_2})({t_n}-{t_1})^{1-\alpha}-({\tau_1}+{\tau_2})({t_n}-{t_2})^{1-\alpha}\bigg],\\
d_n = &{\frac{1}{\tau_2({\tau_1}+{\tau_2})}}\bigg[{\frac{2}{2-\alpha}}\big\{({t_n}-{t_1})^{2-\alpha}-({t_n}-{t_2})^{2-\alpha}\big\}+\tau_1({t_n}-{t_1})^{1-\alpha}-(2\tau_2+\tau_1)({t_n}-{t_2})^{1-\alpha}\bigg].\\\vspace{.2cm}
\end{aligned}
\end{equation*}
Finally for other subintervals $[t_{j-1},t_j]$, $3\leq j\leq n$, using a cubic interpolation function \eqref{cub_int} to approximate \eqref{caputo} provides
\begin{align}\notag
{\frac{1}{\Gamma(1-\alpha)}}\sum_{j=3}^{n}{\int_{t_{j-1}}^{t_j}}({{t_n}-s})^{-\alpha}{\frac{\partial u(x_i,s)}{\partial{s}}}ds
& \approx {\frac{1}{\Gamma(1-\alpha)}}\sum_{j=3}^{n}{\int_{t_{j-1}}^{t_j}}({{t_n}-s})^{-\alpha}{\frac{\partial{I_{3j}u}(x_i,s)}{\partial {s}}}ds,\\\notag
& = \frac{1}{\Gamma(2-\alpha)}\sum_{k = 3}^{n}\Big[w_{1,k}^{n}U_i^k+w_{2,k}^{n}U_i^{k-1}+w_{3,k}^{n}U_i^{k-2}\\\label{e4}
&\hspace{2.6cm}+w_{4,k}^{n}U_i^{k-3}\Big],
\end{align}
where
\begin{equation*}
\begin{aligned}
w_{1,k}^{n} & =
A_1\Big[\tau_{k-1}(\tau_{k-1}+\tau_{k-2}){(t_n-t_{k-1})}^{(1-\alpha)}-\big\{(\tau_k+\tau_{k-1})(2\tau_k+\tau_{k-1}+\tau_{k-2})\\
&\hspace{.4cm}+\tau_k(\tau_k+\tau_{k-1}+\tau_{k-2})\big\}{(t_n-t_k)}^{1-\alpha}+\frac{2}{(2-\alpha)}\big\{{(2\tau_{k-1}+\tau_{k-2})}{(t_n-t_{k-1})^{2-\alpha}}\\
&\hspace{.4cm}-(3\tau_k+2\tau_{k-1}+\tau_{k-2})(t_n-t_k)^{2-\alpha}\big\}+\frac{6}{(2-\alpha)(3-\alpha)}\big\{{(t_n-t_{k-1})}^{3-\alpha}-{(t_n-t_k)}^{3-\alpha}\big\}\Big],
\end{aligned}
\end{equation*}
\begin{equation*}
\begin{aligned}
w_{2,k}^{n} & = 
A_2\Big[\big\{\tau_{k-1}(\tau_k-\tau_{k-1}-\tau_{k-2})+\tau_k(\tau_{k-1}+\tau_{k-2})\big\}{(t_n-t_{k-1})}^{(1-\alpha)}\\
&\hspace{.4cm}+(\tau_k+\tau_{k-1}+\tau_{k-2})(\tau_k+\tau_{k-1}){(t_n-t_k)}^{1-\alpha}+\frac{2}{(2-\alpha)}\big\{(\tau_k-2\tau_{k-1}-\tau_{k-2})(t_n-t_{k-1})^{2-\alpha}\\
&\hspace{.4cm}+{(2\tau_k+2\tau_{k-1}+\tau_{k-2})}{(t_n-t_{k})^{2-\alpha}}\big\}-\frac{6}{(2-\alpha)(3-\alpha)}\big\{{(t_n-t_{k-1})}^{3-\alpha}-{(t_n-t_k)}^{3-\alpha}\big\}\Big],\\
w_{3,k}^{n} & = 
A_3\Big[-\tau_k(\tau_{k-1}+\tau_{k-2}){(t_n-t_{k-1})}^{1-\alpha}-\tau_k(\tau_k+\tau_{k-1}+\tau_{k-2}){(t_n-t_k)}^{(1-\alpha)}\\
&\hspace{.4cm}+\frac{2}{(2-\alpha)}\big\{(\tau_{k-1}+\tau_{k-2}-\tau_k)(t_n-t_{k-1})^{2-\alpha}-(2\tau_k+\tau_{k-1}+\tau_{k-2}){(t_n-t_{k})^{2-\alpha}}\big\}\\
&\hspace{.4cm}+\frac{6}{(2-\alpha)(3-\alpha)}\big\{{(t_n-t_{k-1})}^{3-\alpha}-{(t_n-t_k)}^{3-\alpha}\big\}\Big],\\
w_{4,k}^{n} &= 
A_4\Big[\tau_{k-1}\tau_k{(t_n-t_{k-1})}^{(1-\alpha)}+\tau_k(\tau_k+\tau_{k-1}){(t_n-t_k)}^{1-\alpha}\\
&\hspace{.4cm}+\frac{2}{(2-\alpha)}\big\{{(\tau_k-\tau_{k-1})}{(t_n-t_{k-1})^{2-\alpha}}+(2\tau_k+\tau_{k-1})(t_n-t_k)^{2-\alpha}\big\}\\
&\hspace{.4cm}-\frac{6}{(2-\alpha)(3-\alpha)}\big\{{(t_n-t_{k-1})}^{3-\alpha}-{(t_n-t_k)}^{3-\alpha}\big\}\Big],
\end{aligned}
\end{equation*}
and
\begin{equation*}
\begin{aligned}
\begin{cases}\vspace{.2cm}
A_1 &= ~\frac{1}{\tau_k({\tau_k}+{\tau_{k-1}})({\tau_k}+{\tau_{k-1}}+{\tau_{k-2}})},\\\vspace{.2cm}
A_2 &= ~\frac{1}{\tau_k\tau_{k-1}(\tau_{k-1}+\tau_{k-2})},\\\vspace{.2cm}
A_3 &= ~\frac{1}{\tau_{k-1}\tau_{k-2}({\tau_{k}}+{\tau_{k-1}})},\\
A_4 &= ~\frac{1}{\tau_{k-2}({\tau_{k-1}}+{\tau_{k-2}})({\tau_k}+{\tau_{k-1}}+{\tau_{k-2}})}.\\
\end{cases}
\end{aligned}
\end{equation*}
\vspace{.1cm}

Now adjoining \eqref{e2}, \eqref{e3}, and \eqref{e4}, we obtain an approximation of the Caputo derivative of order \vspace{-1cm}$\alpha\in(0,1)$ as below
\begin{align}\notag
\textsubscript{C}D_{0,t}^{\alpha}u(x_i,t_n) &= \frac{1}{\Gamma(1-\alpha)}\int_{0}^{t_{n}}(t_{n}-s)^{-\alpha}{\frac{\partial u(x_i,s)}{\partial {s}}}ds, \\\notag
&= \frac{1}{\Gamma(1-\alpha)}\bigg[{\int_{t_0}^{t_1}}({{t_n}-s})^{-\alpha}{\frac{\partial u(x_i,s)}{\partial {s}}}ds + {\int_{t_1}^{t_2}}({{t_n}-s})^{-\alpha}{\frac{\partial u(x_i,s)}{\partial {s}}}ds \\\notag
&\hspace{2cm} +\sum_{j=3}^{n}{\int_{t_{j-1}}^{t_j}}({{t_n}-s})^{-\alpha}{\frac{\partial u(x_i,s)}{\partial {s}}}ds\bigg],\\\label{caputo_app}
&\approx \frac{1}{\Gamma(2-\alpha)} \sum_{j=0}^{n}{p_j}{U_i^{n-j}},\\\notag
&= \textsubscript{C}D_{0,t_{K_t}}^{\alpha}u(x_i,t_n).
\end{align}
Here the coefficients $p_j$ have different values for different $n$,
\begin{equation*}
\begin{aligned}
&\text{for }n=1,\ p_0=a_{1},\ \ \ p_1=-a_{1};\\
&\text{for }n=2,\ p_0=d_{2},\ \ \ p_1=a_{2}+c_{2},\ \ \ \ \ \ p_2=b_{2}-a_{2};\\
&\text{for }n=3,\ p_0=w_{1,3}^{3},\ p_1=w_{2,3}^{3}+d_{3},\ \ \ p_2=w_{3,3}^{3}+a_{3}+c_{3},\ \ \ \hspace{.3cm}p_3=w_{4,3}^{3}+b_{3}-a_{3};\\
&\text{for }n=4,\ p_0=w_{1,4}^{4},\ p_1=w_{1,3}^{4}+w_{2,4}^{4},\ \hspace{.01cm}  p_2=w_{3,4}^{4}+w_{2,3}^{4}+d_{4},\ \ \ p_3=w_{3,3}^{4}+w_{4,4}^{4}+a_{4}+c_{4},\\ &\hspace{1.7cm}p_4=w_{4,3}^{4}+b_{4}-a_{4};\\
&\text{for }n=5,\ p_0=w_{1,5}^{5},\ p_1=w_{1,4}^{5}+w_{2,5}^{5},\ p_2=w_{1,3}^{5}+w_{2,4}^{5}+w_{3,5}^{5},\ p_3=w_{2,3}^{5}+w_{3,4}^{5}+w_{4,5}^{5}+d_{5},\\&\hspace{1.7cm}p_4=w_{3,3}^{5}+w_{4,4}^{5}+a_{5}+c_{5},\ \ \ p_5=w_{4,3}^{5}+b_{5}-a_{5};
\end{aligned}
\end{equation*}
\ \ for $6\leq n\leq K_t$, the relation between the coefficients takes form as follows:
\begin{equation}\label{caputo_form}
\begin{aligned}
&\hspace{.3cm}p_0=w_{1,n}^{n},\ \ p_1=w_{1,n-1}^{n}+w_{2,n}^{n},\ \ p_{2} = w_{1,n-2}^{n}+w_{2,n-1}^{n}+w_{3,n}^{n},\hspace{2cm}\\
&\hspace{.3cm}p_{k} = w_{1,n-k}^{n}+w_{2,n-k+1}^{n}+w_{3,n-k+2}^{n}+w_{4,n-k+3}^{n}, \ (3\leq k\leq n-3),\\
&\hspace{.3cm}p_{n-2} = w_{2,3}^{n}+w_{3,4}^{n}+w_{4,5}^{n}+d_{n},\\
&\hspace{.3cm}p_{n-1} = w_{3,3}^{n}+w_{4,4}^{n}+a_{n}+c_{n},\ \ \ \hspace{.2cm} p_{n} = w_{4,3}^{n}+b_{n}-a_{n}. 
\end{aligned}
\end{equation}
\noindent

\subsection{\textbf{Truncation error estimate}}\noindent
This portion covers the estimation of truncation error for the approximation to the Caputo fractional derivative.
\begin{lemma}\label{trunclemma}
Let $u(\cdot,t)\in C[0,T]\cap C^4(0,T]$. Suppose that $\left|\frac{\partial^l u(\cdot,t)}{\partial t^l} \right|\leq C(1+t^{\alpha-l})$ for $l=0,1,\ldots,4,$ there exists a constant C such that $\forall$ $(x_i,t_n)\in \Omega_{h}\times \bar\Omega_{\tau},$ we have
\begin{equation}\label{trunc_eqn}
\begin{aligned}
\left|\textsubscript{C}D_{0,t_{K_t}}^{\alpha}u(x_i,t_n) -\textsubscript{C}D_{0,t}^{\alpha}u(x_i,t_n)\right|\leq Cn^{-\min\{4-\alpha,r\alpha\}},\\
\end{aligned}
\end{equation}
where C is a generic positive constant independent of $\tau_n$.
\end{lemma}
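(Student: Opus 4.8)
The plan is to split the total truncation error into a sum of per-subinterval contributions and bound each type of contribution separately, exactly mirroring the way the approximation was assembled in \eqref{e2}, \eqref{e3}, \eqref{e4}. Write
\[
\textsubscript{C}D_{0,t_{K_t}}^{\alpha}u(x_i,t_n)-\textsubscript{C}D_{0,t}^{\alpha}u(x_i,t_n)
=\frac{1}{\Gamma(1-\alpha)}\Big[R_1+R_2+\sum_{j=3}^{n}R_j^{(n)}\Big],
\]
where $R_1$ comes from replacing $u$ by its linear interpolant $I_{11}u$ on $[t_0,t_1]$, $R_2$ from the quadratic interpolant $I_{22}u$ on $[t_1,t_2]$, and $R_j^{(n)}=\int_{t_{j-1}}^{t_j}(t_n-s)^{-\alpha}\big(\tfrac{\partial}{\partial s}u(x_i,s)-\tfrac{\partial}{\partial s}I_{3j}u(x_i,s)\big)\,ds$ for $j\ge 3$. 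For the cubic pieces I would use the interpolation error formula \eqref{int_error}, differentiate it in $s$, and bound $\big|\tfrac{\partial}{\partial s}(u-I_{3j}u)\big|$ on $[t_{j-1},t_j]$ by $C\,\|u^{(iv)}(x_i,\cdot)\|_{\infty,[t_{j-3},t_j]}\cdot\tau_j^{3}$ (up to combinatorial factors involving the three neighbouring step sizes, which on a graded mesh are all comparable to $\tau_j$). The regularity hypothesis then gives $\|u^{(iv)}(x_i,\cdot)\|_{\infty,[t_{j-3},t_j]}\le C\,t_{j-1}^{\alpha-4}$ for $j\ge 4$, while the first few intervals near $t=0$ must be treated by hand.

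The core of the argument is the two competing regimes. For the interior terms ($j$ not too small), combine the bound $|R_j^{(n)}|\le C\,\tau_j^{3}\,t_{j-1}^{\alpha-4}\int_{t_{j-1}}^{t_j}(t_n-s)^{-\alpha}\,ds$ with the graded-mesh estimates $\tau_j\le C\,T\,K_t^{-r}j^{r-1}$ and $t_j\asymp T(j/K_t)^{r}$, and sum over $j$. This is the standard Stynes-type calculation: the sum $\sum_j \tau_j^{3} t_{j-1}^{\alpha-4}\int_{t_{j-1}}^{t_j}(t_n-s)^{-\alpha}ds$ is dominated, after inserting the power-law mesh, by an integral of the form $\int_{0}^{t_n}\xi^{3(r-1)/r}\,\xi^{(\alpha-4)}\,(t_n-s)^{-\alpha}\,ds$-type expression, which one evaluates via a Beta-function/split-at-$t_n/2$ argument to get $C\,K_t^{-(4-\alpha)}$ worth of decay in $n$, i.e.\ $Cn^{-(4-\alpha)}$ once everything is written in terms of $n$ rather than $K_t$. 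Separately, the boundary terms near $t=0$ — $R_1$, $R_2$, and the first cubic intervals where the decay $t^{\alpha-4}$ of the higher derivatives cannot be used because $t$ is $O(\tau_1)$ — are estimated crudely using only $|\partial_s u|\le C(1+s^{\alpha-1})$, which already lives in $L^1$; this produces a term of order $\tau_1^{\alpha}\asymp (T K_t^{-r})^{\alpha}=C K_t^{-r\alpha}$, i.e.\ $Cn^{-r\alpha}$. Taking the larger of the two exponents' reciprocals yields the stated $n^{-\min\{4-\alpha,r\alpha\}}$.

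I expect the main obstacle to be bookkeeping in the interior sum: the cubic-interpolation error, once differentiated and integrated against the weakly singular kernel $(t_n-s)^{-\alpha}$, must be summed over $j$ with the mesh-dependent weights, and one has to be careful to keep the constants uniform in $n$ and to handle the endpoint terms $j=n-2,n-1,n$ (where $t_n-s$ is itself small) in the same breath as the bulk terms. The split of the kernel integral at $s=t_n/2$ (or equivalently splitting the $j$-sum at $j\approx n/2$) is the device that makes both halves converge, and verifying that the $j\approx n$ tail contributes no worse than $n^{-(4-\alpha)}$ — rather than something degraded by the singularity of the kernel — is the delicate point. The regularity assumption $|\partial_t^l u|\le C(1+t^{\alpha-l})$ is exactly what is needed to make the boundary estimate give $n^{-r\alpha}$ and the interior estimate give $n^{-(4-\alpha)}$; I would organize the write-up so that these two sources of error are visibly the only two contributions, and the $\min$ then appears automatically.
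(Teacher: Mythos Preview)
Your overall architecture---split into the linear piece on $[t_0,t_1]$, the quadratic piece on $[t_1,t_2]$, the cubic pieces for $j\ge 3$, then split the cubic sum at $j\approx n/2$---is exactly what the paper does. However, there is a genuine gap in your treatment of the cubic pieces that prevents you from reaching the sharp exponent $4-\alpha$.

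You propose to bound $\bigl|\partial_s(u-I_{3j}u)\bigr|\le C\,\|u^{(iv)}\|_{\infty,[t_{j-3},t_j]}\,\tau_j^{3}$ and then integrate against the kernel $(t_n-s)^{-\alpha}$, giving $|R_j^{(n)}|\le C\,t_{j-1}^{\alpha-4}\,\tau_j^{3}\int_{t_{j-1}}^{t_j}(t_n-s)^{-\alpha}\,ds$. For the upper half of the sum ($j\in[\lceil n/2\rceil+1,n-1]$), where $t_{j-1}\asymp t_n$ and $\tau_j\asymp\tau_n$, this yields
\[
\sum_{j>\lceil n/2\rceil}|R_j^{(n)}|\;\le\; C\,t_n^{\alpha-4}\tau_n^{3}\int_{t_{\lceil n/2\rceil}}^{t_n}(t_n-s)^{-\alpha}\,ds\;\le\; C\,t_n^{\alpha-4}\tau_n^{3}\,t_n^{1-\alpha}\;=\;C\,(\tau_n/t_n)^{3}\;\asymp\;Cn^{-3},
\]
which is strictly weaker than the required $n^{-(4-\alpha)}$ (recall $4-\alpha>3$). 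So your assertion that this sum ``evaluates \ldots\ to $Cn^{-(4-\alpha)}$'' is not supported by the estimate you actually wrote down; with your bound the final result would only be $Cn^{-\min\{3,\,r\alpha\}}$.

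What the paper does instead is integrate by parts on each $[t_{k-1},t_k]$ with $k<n$: because the interpolation error $u-I_{3k}u$ vanishes at both endpoints $t_{k-1}$ and $t_k$ (they are nodes of $I_{3k}$), the boundary terms drop out and one obtains
\[
T_{nk}=\frac{-\alpha}{\Gamma(1-\alpha)}\int_{t_{k-1}}^{t_k}(t_n-s)^{-\alpha-1}\bigl(I_{3k}u-u\bigr)(x_i,s)\,ds,
\]
so that the \emph{undifferentiated} error \eqref{int_error} enters, bounded by $C\,t_{k-1}^{\alpha-4}\tau_k^{4}$ rather than $\tau_k^{3}$. The price is the more singular kernel $(t_n-s)^{-\alpha-1}$, but its integral telescopes over $k$, and the upper-half sum then gives $C\,t_n^{\alpha-4}\tau_n^{4}\,\tau_n^{-\alpha}=C(\tau_n/t_n)^{4-\alpha}\asymp Cn^{-(4-\alpha)}$. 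The extra power of $\tau_k$ gained from the vanishing boundary terms is precisely what upgrades $n^{-3}$ to $n^{-(4-\alpha)}$; without it the lemma as stated does not follow.

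A smaller slip: your crude boundary estimate for $R_1$ produces $\tau_1^{\alpha}=T^{\alpha}K_t^{-r\alpha}$, which you then identify with $n^{-r\alpha}$. These are not the same (the former depends only on $K_t$, the latter on the time level $n$). To get the correct $n^{-r\alpha}$ you must retain the kernel factor $(t_n-s)^{-\alpha}\le C\,t_n^{-\alpha}$ before integrating the singular part of $\partial_s u$, giving $C\,t_n^{-\alpha}t_1^{\alpha}=C(t_1/t_n)^{\alpha}=Cn^{-r\alpha}$; this is how the paper's bound \eqref{s3} arises.
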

\begin{proof}
According to Taylor's expansion formula,
\begin{align}\notag
\tau_{k+1} = t_{k+1}-t_k &= T\Big(\frac{k+1}{K_t}\Big)^r-T\Big(\frac{k}{K_t}\Big)^r,\\\label{s1}
&\leq CT{K_t}^{-r}k^{r-1},\ k=0,1,\ldots,K_t-1.
\end{align}
For $n=1,2,\ldots,K_t, \text{ and }i=1,2,\ldots,K_x-1,$ we have
\begin{align}\notag
&\textsubscript{C}D_{0,t_{K_t}}^{\alpha}u_i^n-\textsubscript{C}D_{0,t}^{\alpha}u(x_i,t_n) = \sum_{k=1}^nT_{nk},\\\label{s2}
&\left|\textsubscript{C}D_{0,t_{K_t}}^{\alpha}u_i^n-\textsubscript{C}D_{0,t}^{\alpha}u(x_i,t_n)\right| \leq \sum_{k=1}^n\left|T_{nk}\right|.
\end{align}
According to our algorithm, we need to form at least three time subintervals to discretize the Caputo derivative,  i.e., $n\geq3$.
Here, for $k = 1$ in \eqref{s2}, we get
\begin{equation*}
\begin{aligned}
T_{n1} = \frac{1}{\Gamma(1-\alpha)}{\int_{t_0}^{t_1}}({{t_n}-s})^{-\alpha}\bigg[{\frac{\partial I_{11}u}{\partial s}(x_i,s)-\frac{\partial u}{\partial s}(x_i,s)}\bigg]ds.
\end{aligned}
\end{equation*}
Moreover, from \cite{Stynes_SIAM}, we have
\begin{equation}\label{s3}
\begin{aligned}
\left|T_{n1}\right|\leq Cn^{-r\alpha},\text{ where } n=3,4,\ldots,K_t.
\end{aligned}
\end{equation}
Now for $k = 2$,
\begin{equation*}
\begin{aligned}
T_{n2} = \frac{1}{\Gamma(1-\alpha)}{\int_{t_1}^{t_2}}({{t_n}-s})^{-\alpha}\bigg[{\frac{\partial I_{22}u}{\partial s}(x_i,s)-\frac{\partial u}{\partial s}(x_i,s)}\bigg]ds.
\end{aligned}
\end{equation*}
From \cite{qiao2022fast}, we have
\begin{equation}\label{s4}
\begin{aligned}
\left|T_{n2}\right|\leq Cn^{-r(1+\alpha)}, \text{ where }n=3,4,\ldots,K_t.
\end{aligned}
\end{equation}
For $k = 3,4,\ldots,K_t,$
\begin{equation*}
\begin{aligned}
T_{nk} = \frac{1}{\Gamma(1-\alpha)}{\int_{t_{k-1}}^{t_k}}({{t_n}-s})^{-\alpha}\bigg[{\frac{\partial I_{3k}u}{\partial s}(x_i,s)-\frac{\partial u}{\partial s}(x_i,s)}\bigg]ds.
\end{aligned}
\end{equation*}
For $n=3,4,\ldots,K_t,\text{ and } k=3,4,\ldots,\lceil{\frac{n-1}{2}}\rceil$, using \eqref{int_error} here we get,
\begin{align*}
T_{nk} &= \frac{1}{\Gamma(1-\alpha)}{\int_{t_{k-1}}^{t_k}}({{t_n}-s})^{-\alpha}\bigg[{\frac{\partial I_{3k}u}{\partial s}(x_i,s)-\frac{\partial u}{\partial s}(x_i,s)}\bigg]ds,\\
&= \frac{-\alpha}{\Gamma(1-\alpha)}{\int_{t_{k-1}}^{t_k}}({{t_n}-s})^{-\alpha-1}\frac{u_{tttt}(x_i,\phi_k)}{12}(s-t_{k-3})(s-t_{k-2})(s-t_{k-1})(s-t_{k})ds.\\
\end{align*}
Then, using the assumption $\left|\frac{\partial^l u(\cdot,t)}{\partial t^l} \right|\leq C(1+t^{\alpha-l})$ for $l=0,1,\ldots,4$, we get
\begin{equation}\label{s5}
    \left|T_{nk}\right| \leq \ Ct_{k-1}^{\alpha-4}\tau_{k}^{4}\int_{t_{k-1}}^{t_k}(t_n-s)^{-\alpha-1}ds \ \leq Ct_{k-1}^{\alpha-4}\tau_{k}^{5}(t_n-t_k)^{-\alpha-1},
\end{equation}
where
\begin{align}\notag
(t_n-t_k)^{-\alpha-1}={\bigg(T\frac{n^r-k^r}{{K_t}^r}\bigg)}^{-\alpha-1}&=T^{-\alpha-1}\bigg({\frac{{K_t}^r}{n^r-k^r}}\bigg)^{\alpha+1},\\\notag
&\leq T^{-\alpha-1}\bigg({\frac{{K_t}^r}{n^r-{\lceil\frac{n-1}{2}\rceil}^r}}\bigg)^{\alpha+1},\\\label{s6} 
&\leq CT^{-\alpha-1}{\bigg(\frac{K_t}{n}\bigg)}^{r(1+\alpha)}.
\end{align}
Now, using \eqref{s1} and \eqref{s6} in \eqref{s5}, we get
\begin{align*} 
\left|T_{nk}\right| &\leq C\bigg[T\bigg(\frac{k-1}{K_t}\bigg)^r\bigg]^{\alpha-4}T^5{K_t}^{-5r}(k-1)^{5(r-1)}T^{-\alpha-1}\left(\frac{K_t}{n}\right)^{r(1+\alpha)},\\
&\leq C{K_t}^{r(4-\alpha)-5r+r(1+\alpha)}n^{-r(1+\alpha)}(k-1)^{r(\alpha-4)+5(r-1)},\\
&\leq Cn^{-r(1+\alpha)}(k-1)^{r(1+\alpha)-5}.
\end{align*}
Hence,
\begin{align}\notag
&\sum_{k=3}^{\lceil\frac{n-1}{2}\rceil}T_{nk}\leq Cn^{-r(1+\alpha)}\sum_{k=3}^{\lceil\frac{n-1}{2}\rceil}(k-1)^{r(1+\alpha)-5},\\\label{s8}
&\hspace{1.55cm}\leq
\begin{cases}
Cn^{-r(1+\alpha)},\ \ \ r(1+\alpha)<4,\\
Cn^{-4}\ln (n),\ \ r(1+\alpha)=4,\\
Cn^{-4},\ \ \ \ \ \ \ \ \ r(1+\alpha)>4,
\end{cases}
\end{align}
and for $k=\lceil{\frac{n-1}{2}}\rceil+1,\ldots,n$, we have
\begin{align}\notag
\sum_{k=\lceil\frac{n-1}{2}\rceil+1}^{n-1}T_{nk}&=\sum_{k=\lceil\frac{n-1}{2}\rceil+1}^{n-1}t_{k-1}^{\alpha-4}\tau_k^4\int_{t_{k-1}}^{t_k}(t_n-s)^{-\alpha-1}ds,\\\notag
&\leq Ct_{k-1}^{\alpha-4}\tau_k^4\sum_{k=\lceil\frac{n-1}{2}\rceil+1}^{n-1}\frac{1}{\alpha}\bigg[(t_n-t_k)^{-\alpha}-(t_n-t_{k-1})^{-\alpha}\bigg],\\\notag
&\leq Ct_{k-1}^{\alpha-4}\tau_k^4\bigg[(t_n-t_{n-1})^{-\alpha}-(t_n-t_{\lceil\frac{n-1}{2}\rceil})^{-\alpha}\bigg],\\\notag
&\leq Ct_{k-1}^{\alpha-4}\tau_k^4\tau_n^{-\alpha}\leq Ct_{k-1}^{\alpha-4}\bigg[TK_t^{-r}(k-1)^{r-1}\bigg]^4\tau_n^{-\alpha},\\\notag
&\leq Ct_n^{\alpha-4}T^4K_t^{-4r}n^{4(r-1)}\bigg[TK_t^{-r}n^{r-1}\bigg]^{-\alpha},\\\notag
&\leq CT^4K_t^{-4r}n^{4(r-1)}T^{\alpha-4}\bigg(\frac{n}{K_t}\bigg)^{r(\alpha-4)}T^{-\alpha}K_t^{r\alpha}n^{-\alpha(r-1)},\\\label{s9}
&\leq Cn^{-(4-\alpha)}.
\end{align}
Finally, we consider $T_{nn}$ for $n>2$ as
\begin{align}\notag
T_{nn} &= \frac{1}{\Gamma(1-\alpha)}{\int_{t_{n-1}}^{t_n}}({{t_n}-s})^{-\alpha}\bigg[{\frac{\partial I_{3k}u}{\partial s}(x_i,s)-\frac{\partial u}{\partial s}(x_i,s)}\bigg]ds,\\\notag
& \leq Ct_{n-1}^{\alpha-4}\tau_{n-1}^{3}\int_{t_{n-1}}^{t_n}({{t_n}-s})^{-\alpha}ds,\\\notag
& \leq Ct_{n-1}^{\alpha-4}\tau_{n-1}^{3}({t_n}-t_{n-1})^{1-\alpha},\\\notag
& \leq C\big[TK_t^{-r}(n-1)^r\big]^{\alpha-4}\big[TK_t^{-r}(n-2)^{r-1}\big]^{3}\big[TK_t^{-r}(n-1)^{r-1}\big]^{1-\alpha},\\\label{s10}
& \leq Cn^{-(4-\alpha)}. 
\end{align}
Combining formulas \eqref{s3}, \eqref{s4}, and \eqref{s8}-\eqref{s10}, we obtain
\begin{equation*}
\begin{aligned}
\left|\textsubscript{C}D_{0,t_{K_t}}^{\alpha}u(x_i,t_n)-\textsubscript{C}D_{0,t}^{\alpha}u(x_i,t_n)\right| \leq Cn^{-\min\{4-\alpha,r\alpha\}}.
\end{aligned}
\end{equation*}
\end{proof}
\begin{remark}\label{rem_1}
    We get the truncation error of the approximation to Caputo derivative of a non-smooth function on the graded mesh as $\mathcal{O}(\tau^{\min\{4-\alpha,r\alpha\}})$. However, in \cite{Cao_2015}, the truncation error of Caputo derivative approximation for a smooth function on the uniform mesh is obtained as $\mathcal{O}(\tau^{4-\alpha})$. Hence it can be easily observed that for a non-smooth function, grading improves the convergence of the approximation and for $r\geq\frac{4-\alpha}{\alpha}$, the convergence will always be optimum and equal to that of the smooth solution, i.e., $(4-\alpha)$. 
\end{remark}
In this next section, with the help of the above approximation of the Caputo derivative, we construct a high-order numerical scheme for TFDE \eqref{diff_eqn}-\eqref{ini_eqn} and perform some important analysis on it.
\section{Numerical scheme for TFDE}\label{scheme}
\noindent
For the function $u(x,t)$, its analytical and approximate solutions at the mesh point $(x_i,t_n)$ are denoted by $U_i^n$ and $u_i^n$, respectively.
In view of the obtained results \eqref{space_eqn} and \eqref{trunc_eqn}, we consider the following approximation of the TFDE \eqref{diff_eqn}:
\begin{equation}\label{h1}
\begin{aligned}
L_{K_x,K_t}u_i^n=\textsubscript{C}D_{0,t_{K_t}}^{\alpha}u_i^n-\varrho\delta_x^2u_i^n=f_i^n,\ 1\leq i\leq K_x-1, 1\leq n\leq K_t,
\end{aligned}
\end{equation}
Therefore, we derive the following finite difference scheme from \eqref{h1}:
\begin{equation}\label{h3}
\begin{aligned}
&\frac{1}{\Gamma(2-\alpha)}\Big[{p_0}u_i^n+{p_1}u_i^{n-1}+{p_2}u_i^{n-2}+\sum_{k=3}^{n-3}{p_k}u_i^{n-k}+{p_{n-2}}u_i^2+{p_{n-1}}u_i^1+{p_n}u_i^0\Big]\\ 
&\hspace{1.5cm}= \varrho \frac{u_{i+1}^n-2u_i^n+u_{i-1}^n}{h^2}+f_i^n,
\end{aligned}
\end{equation}
where $1\leq n\leq K_t$, and $1\leq i\leq K_x-1$. The initial and boundary conditions \eqref{bdry_eqn}-\eqref{ini_eqn} can be discretized as
\begin{align}\label{h4}
u_0^n &= 0 = u_{K_x}^n,\ 0< n\leq K_t,\\\label{h5}
u_i^0 &= \varphi(x_i), \ 0\leq i\leq K_x.
\end{align}
Multiplying \eqref{h3} by $h^2$ and introducing the parameter $\mu=\frac{h^2}{\Gamma(2-\alpha)}$ to simplify the expression, we get
\begin{equation}\label{h6}
\begin{aligned}
\text{ for } n=1,\ -\varrho u_{i-1}^1+(\mu p_0+2\varrho)u_i^1-\varrho u_{i+1}^1 &= -\mu{p_1}u_i^0+h^2{f_i^1};\\
\text{ for } n=2,\ -\varrho u_{i-1}^2+(\mu p_0+2\varrho)u_i^2-\varrho u_{i+1}^2 &= -\mu{p_1}u_i^1-\mu{p_2}u_i^0+h^2{f_i^2};\\
\text{ for } n\geq 3,\ -\varrho u_{i-1}^n+(\mu p_0+2\varrho)u_i^n-\varrho u_{i+1}^n  &= -\mu\sum_{k=1}^n{p_k}u_i^{n-k}+h^2{f_i^n}.\\
\end{aligned}
\end{equation}
The system of linear equations \eqref{h6} can also be rewritten in the following matrix form:
\begin{align*}
\text{ for } n=1,\ {A}u^1 &= -\mu p_1{u^0}+h^2{F^1};\\
\text{ for } n=2,\ {A}u^2 &= -\mu p_1{u^1}-\mu p_2{u^0}+h^2{F^2};\\
\text{ for } n\geq3,\ {A}u^n &= -\mu{\sum_{k=1}^n} p_k{u^{n-k}}+h^2{F^n};
\end{align*}
where the matrix and vectors are defined by
\begin{align}\label{A}
A &=\ tri\big[-\varrho,\mu {p_0}+2\varrho,-\varrho \big],\\\label{u}
u^n &=\ (u_1^n,u_2^n,\ldots,u_{K_x-1}^n)^T,\\ \label{f}
F^n &=\ (f_1^n,f_2^n,\ldots,f_{K_x-1}^n)^T,\text{ for } 1\leq n\leq K_t.
\end{align}

\begin{theorem}
The solution of the difference scheme \eqref{h3}-\eqref{h5} is unique. 
\end{theorem}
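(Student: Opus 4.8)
The plan is to reduce the question to the invertibility of the single matrix $A$ defined in \eqref{A}. Reading \eqref{h6} as a time-stepping recursion, once $u^0,\dots,u^{n-1}$ have been determined the scheme prescribes $u^n$ through exactly one linear system $Au^n=g^n$, where $g^n$ gathers the already-known contributions $-\mu\sum_{k=1}^{n}p_k u^{n-k}$ from earlier levels together with $h^2F^n$ (for $n=1,2$ this is the first two lines of \eqref{h6}), and $u^0$ is fixed outright by the initial data \eqref{h5} while the Dirichlet values $u_0^n=u_{K_x}^n=0$ are fixed by \eqref{h4}. Thus uniqueness of the whole mesh function will follow by induction on $n$ as soon as $A$ is shown to be nonsingular.

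The key step, and the one requiring actual work, is to verify that the leading coefficient $p_0$ is strictly positive for every $n\in\mathbb{Z}_{[1,K_t]}$. For $n=1$ one has $p_0=a_1=\tau_1^{-\alpha}>0$. For $n=2$, substituting $t_2-t_2=0$ and $t_2-t_1=\tau_2$ into the formula for $d_2$ annihilates the terms carrying a factor $(t_2-t_2)^{\bullet}$ and leaves $p_0=d_2=\frac{1}{\tau_2(\tau_1+\tau_2)}\big[\frac{2}{2-\alpha}\tau_2^{2-\alpha}+\tau_1\tau_2^{1-\alpha}\big]>0$. For $n\ge 3$, $p_0=w_{1,n}^{n}$; setting $k=n$ in the expression for $w_{1,k}^{n}$ kills every summand containing $(t_n-t_k)^{\bullet}$ and, since $t_n-t_{n-1}=\tau_n$, leaves $p_0=A_1\big[\tau_{n-1}(\tau_{n-1}+\tau_{n-2})\tau_n^{1-\alpha}+\frac{2}{2-\alpha}(2\tau_{n-1}+\tau_{n-2})\tau_n^{2-\alpha}+\frac{6}{(2-\alpha)(3-\alpha)}\tau_n^{3-\alpha}\big]$, which is positive because $A_1>0$ and $2-\alpha,\,3-\alpha>0$ for $\alpha\in(0,1)$. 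The point here is simply that no sign cancellation occurs once the coefficient formulas are evaluated at the right endpoint; I expect this case check to be the only genuinely delicate part of the argument.

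Granted $p_0>0$, and noting $\mu=h^2/\Gamma(2-\alpha)>0$, the tridiagonal matrix $A=tri[-\varrho,\mu p_0+2\varrho,-\varrho]$ is strictly diagonally dominant: in every row the diagonal entry $\mu p_0+2\varrho$ strictly exceeds the sum of the moduli of the off-diagonal entries, which equals $2\varrho$ for interior rows and $\varrho$ for the first and last rows. By the Lévy–Desplanques theorem (equivalently Gershgorin's circle theorem), $A$ is invertible. The induction then closes: $u^0$ is unique by \eqref{h5}; if $u^0,\dots,u^{n-1}$ are unique then $g^n$ is a fixed vector and $u^n=A^{-1}g^n$ is the unique interior solution, with the boundary entries fixed by \eqref{h4}. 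Hence the solution of \eqref{h3}--\eqref{h5} is unique on $\bar\omega_{h\tau}$.
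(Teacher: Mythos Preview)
Your proof is correct and follows the same route as the paper: strict diagonal dominance of the tridiagonal matrix $A$ forces invertibility, and time-stepping then yields uniqueness. The paper's own proof simply asserts that $A$ is strictly diagonally dominant ``irrespective of any possible values of $h,\tau_n$, and $\alpha$'' without justification; your explicit case-by-case verification that $p_0>0$ for $n=1,2$ and $n\ge 3$ supplies precisely the missing ingredient, so your argument is in fact more complete than the paper's.
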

\begin{proof}
The coefficient matrix $A$ in \eqref{A} is strictly diagonally dominant irrespective of any possible values of $h,\tau_n$, and $\alpha$, so it is non-singular. Hence, $A$ is invertible. Consequently, the difference scheme \eqref{h3}-\eqref{h5} has a unique solution.
\end{proof}
\subsection{\textbf{Stability and convergence analysis of the scheme on uniform mesh}}\label{stability}
\noindent
In this segment, we discuss the stability and convergence of the proposed difference scheme \eqref{h3}-\eqref{h5} on a uniform mesh. To accomplish that, we first rewrite the difference scheme with $r=1$ in a desired manner.\\
For the case $r=1$, the coefficients become $p_j=\tau^{-\alpha}g_j$. Hence, the scheme \eqref{h3} is now
\begin{align*}
    &\frac{\tau^{-\alpha}}{\Gamma(2-\alpha)}\left[\sum_{k=0}^{n}{g_k}u_i^{n-k}\right]= \varrho \frac{u_{i+1}^n-2u_i^n+u_{i-1}^n}{h^2}+f_i^n,\\
    &-\eta\epsilon u_{i+1}^n+(g_0+2\eta\epsilon )u_i^n-\eta\epsilon u_{i-1}^n=-\sum_{k=1}^{n}{g_k}u_i^{n-k}+\epsilon f_i^n,
\end{align*}
where $\eta=\frac{\varrho}{h^2}$ and $\epsilon=\tau^{\alpha}\Gamma(2-\alpha)$.
So the matrix equation of the above scheme is
\begin{equation}\label{me1}
    Tu^n = -{\sum_{k=1}^n} g_k{u^{n-k}}+\epsilon{F^n},
\end{equation}
where the matrix $T$ is given by 
\begin{equation*}
    T =\ tri\big[-\eta\epsilon,g_0+2\eta\epsilon ,-\eta\epsilon \big],
\end{equation*}
and other vectors $u^n$ and $F^n$ are from \eqref{u} and \eqref{f}, respectively.

\bigskip
Following is a result from \cite{Cao_2015}, which will be used to establish the stability and convergence of the proposed scheme on uniform mesh, i.e., $r=1$. 
\begin{lemma}\label{lem3.1}
For the case $r=1$, the following properties of the coefficients $g_j$ hold:
\begin{enumerate}
    \label{xx}\item[$(1)$]  When $n\geq 3$, for any order $\alpha \in (0,1)$,
    \bigskip
    \begin{enumerate}
        \item $g_0=\frac{1}{3}+\frac{1}{2-\alpha}+{\frac{1}{(2-\alpha)(3-\alpha)}}\in(1,\frac{11}{6})$,
        \item $g_2>0$,\vspace{.1cm}
        \item $g_j<0,\ j\geq1,j\neq2$.
    \end{enumerate}\bigskip
    \item[$(2)$]$\sum_{j=0}^n{g_j}=0$, for any order $\alpha \in (0,1)$ and $\text{for all } n$.
\end{enumerate}
\end{lemma}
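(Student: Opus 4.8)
The plan is to push everything down to explicit computation: on the uniform mesh $\tau_k\equiv\tau$ one has $t_n-t_k=(n-k)\tau$ and $t_n=(T/K_t)n$, so every prefactor $A_1,\dots,A_4$ and every polynomial-in-$\tau$ coefficient occurring in $w_{1,k}^n,\dots,w_{4,k}^n$ and in $a_n,b_n,c_n,d_n$ collapses to a fixed rational number, and each $g_j=\tau^{\alpha}p_j$ becomes an explicit rational linear combination of the three kernels $(n-k)^{1-\alpha}$, $(n-k)^{2-\alpha}$, $(n-k)^{3-\alpha}$ evaluated at finitely many consecutive indices $n-k$.

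I would dispose of $(2)$ first, since it needs none of the closed forms. The approximation $\textsubscript{C}D_{0,t_{K_t}}^{\alpha}u(x_i,t_n)=\frac{1}{\Gamma(2-\alpha)}\sum_{j=0}^{n}p_j U_i^{n-j}$ is built by replacing $u(x_i,\cdot)$ on each subinterval by a Lagrange interpolant; interpolation reproduces constants exactly and $\textsubscript{C}D_{0,t}^{\alpha}$ annihilates constants, so the quadrature is exact on $u\equiv 1$. Evaluating both sides at $u\equiv 1$ gives $0=\frac{1}{\Gamma(2-\alpha)}\sum_{j=0}^{n}p_j$, hence $\sum_{j=0}^{n}g_j=0$ for every $n$ and every $\alpha\in(0,1)$.

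For $(1)(a)$, note that $p_0=w_{1,n}^n$ for every $n\ge 3$; specializing $w_{1,n}^n$ to the uniform mesh, where $t_n-t_{n-1}=\tau$ and $t_n-t_n=0$, kills all terms except three and yields $g_0=\tau^{\alpha}p_0=\frac{1}{3}+\frac{1}{2-\alpha}+\frac{1}{(2-\alpha)(3-\alpha)}$. Each of these three summands is strictly increasing in $\alpha$ on $(0,1)$ (the positive denominators $2-\alpha$ and $(2-\alpha)(3-\alpha)$ are decreasing there), so $g_0$ is strictly increasing and thus lies strictly between its limit values $\frac{1}{3}+\frac{1}{2}+\frac{1}{6}=1$ as $\alpha\to 0^+$ and $\frac{1}{3}+1+\frac{1}{2}=\frac{11}{6}$ as $\alpha\to 1^-$, which is exactly $g_0\in(1,\frac{11}{6})$.

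The real work is the sign analysis in $(1)(b)$–$(1)(c)$. Using \eqref{caputo_form} I would split the indices into an interior range $3\le j\le n-3$, where $g_j$ equals the uniform-mesh value of $w_{1,n-j}^n+w_{2,n-j+1}^n+w_{3,n-j+2}^n+w_{4,n-j+3}^n$ and, because the distances $t_n-t_k$ involved depend only on $j$, is a fixed function of $j$ and $\alpha$; and the finitely many boundary indices $j\in\{1,2,n-2,n-1,n\}$, which carry in addition the contributions of $a_n,b_n,c_n,d_n$. For the interior coefficients the four overlapping cubic stencils telescope, so $g_j$ can be rewritten as a higher-order finite difference of the smooth kernel $x\mapsto x^{3-\alpha}$ together with lower-order corrections; a mean-value argument for these finite differences, using that the relevant derivatives of $x\mapsto x^{3-\alpha}$ (and of $x^{2-\alpha}$, $x^{1-\alpha}$) have a definite sign on $(0,\infty)$, then forces $g_j<0$ there. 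The remaining boundary coefficients, together with the genuinely exceptional $g_2>0$ where the stencil overlap changes the sign, I would verify by direct evaluation of the collapsed closed forms, using $\sum_j g_j=0$ as a consistency check. I expect the main obstacle to be precisely this bookkeeping near the two ends of the stencil — pinning down that $g_2$ is the only positive coefficient and bounding the $n$-dependent weights $g_{n-2},g_{n-1},g_n$ with a sign uniform in $n$; this is the content taken from \cite{Cao_2015}, so in the write-up one may instead cite that reference after recording the specialization to the present notation.
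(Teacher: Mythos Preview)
Your proposal is correct as far as it goes, and it actually supplies more content than the paper does: the paper's entire proof of this lemma is the single sentence ``For proof, we refer \cite[Lemma 2.1 and 2.2]{Cao_2015}.'' There is thus no argument in the paper to compare against; everything substantive lives in the cited reference.

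Your self-contained treatments of $(2)$ and $(1)(a)$ are valid and worth keeping. The exactness-on-constants argument for $\sum_{j=0}^n g_j=0$ is clean and requires no closed forms; the explicit evaluation of $g_0=\tau^{\alpha}w_{1,n}^n$ on the uniform mesh and the monotonicity argument on $(0,1)$ are both correct (each summand $\tfrac{1}{2-\alpha}$ and $\tfrac{1}{(2-\alpha)(3-\alpha)}$ is indeed strictly increasing there, and the endpoint limits give $1$ and $\tfrac{11}{6}$). These give the reader something the paper does not.

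For $(1)(b)$--$(1)(c)$ your plan --- collapse the four-term stencil in the interior range to a higher-order finite difference of $x\mapsto x^{3-\alpha}$ and appeal to the sign of the relevant derivative, then handle the finitely many boundary indices $j\in\{1,2,n-2,n-1,n\}$ by direct evaluation --- is the right shape, but as you acknowledge it is only a sketch: the telescoping and the mean-value step need to be written out, and the boundary cases (especially showing $g_2>0$ is the \emph{unique} positive interior coefficient and that $g_{n-2},g_{n-1},g_n<0$ uniformly in $n$) are exactly the nontrivial bookkeeping carried out in \cite{Cao_2015}. Since you end by deferring to that reference for this part, your write-up and the paper's ultimately rest on the same citation; the difference is that you have isolated which parts genuinely need it.
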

\begin{proof}
    For proof, we refer \cite[Lemma 2.1 and 2.2]{Cao_2015}
\end{proof}
Next, we derive a lemma that will be further utilized to obtain the stability estimate of the scheme.
\begin{lemma} The norm $\|T^{-1}\|_\infty \leq 1 $ where $\|T\|_\infty=\max_{1\leq i\leq K_x-1}\Big\{\sum_{j=1}^{K_x-1}|t_{i,j}|\Big\}$.
\end{lemma}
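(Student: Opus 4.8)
The plan is to bound $\|T^{-1}\|_\infty$ via the standard dual characterization of the infinity-norm of a matrix inverse: for a nonsingular matrix $T$, $\|T^{-1}\|_\infty = \max_{\|y\|_\infty = 1}\|T^{-1}y\|_\infty$, so it suffices to show that whenever $Tx = y$ with $\|y\|_\infty \le 1$, one has $\|x\|_\infty \le 1$. Equivalently, I will show the weaker-looking but sufficient statement that $\|x\|_\infty \le \|Tx\|_\infty$ for every vector $x \in \mathbb{R}^{K_x-1}$, which is exactly the claim $\|T^{-1}\|_\infty \le 1$ once we know $T$ is invertible (and invertibility follows from strict diagonal dominance, already noted in the uniqueness theorem).

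First I would recall from Lemma~\ref{lem3.1}(1a) that $g_0 = \frac{1}{3} + \frac{1}{2-\alpha} + \frac{1}{(2-\alpha)(3-\alpha)} > 1$ for $\alpha \in (0,1)$, and that $\eta = \varrho/h^2 > 0$, $\epsilon = \tau^\alpha \Gamma(2-\alpha) > 0$, so the diagonal entries of $T$ equal $g_0 + 2\eta\epsilon$ and the off-diagonal entries are $-\eta\epsilon$. Then I would pick an index $i_0$ where $|x_{i_0}| = \|x\|_\infty$ and examine row $i_0$ of $Tx = y$:
\begin{equation*}
(g_0 + 2\eta\epsilon)\,x_{i_0} - \eta\epsilon\,x_{i_0-1} - \eta\epsilon\,x_{i_0+1} = y_{i_0},
\end{equation*}
with the convention $x_0 = x_{K_x} = 0$ coming from the boundary rows (so the argument also covers $i_0 = 1$ and $i_0 = K_x - 1$, where only one off-diagonal term is present). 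Rearranging and using the triangle inequality together with $|x_{i_0\pm 1}| \le |x_{i_0}|$ gives
\begin{equation*}
(g_0 + 2\eta\epsilon)\,|x_{i_0}| \le |y_{i_0}| + \eta\epsilon\,|x_{i_0-1}| + \eta\epsilon\,|x_{i_0+1}| \le |y_{i_0}| + 2\eta\epsilon\,|x_{i_0}|,
\end{equation*}
hence $g_0\,|x_{i_0}| \le |y_{i_0}| \le \|y\|_\infty$, and since $g_0 > 1$ we conclude $\|x\|_\infty = |x_{i_0}| \le \|y\|_\infty = \|Tx\|_\infty$, which is the desired bound.

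The only real subtlety — and the step I would flag as needing care rather than being hard — is the handling of the first and last rows, where the tridiagonal structure means one neighbour is absent; there the inequality $g_0\,|x_{i_0}| \le |y_{i_0}|$ comes out even more easily (with $g_0 + \eta\epsilon$ on the left and only $\eta\epsilon\,|x_{i_0}|$ subtracted), so nothing breaks. I would also remark that the estimate is genuinely sharp in spirit: it is $g_0 > 1$ that buys the constant $1$, and this is precisely the property $p_0 \ge 1$ (here $g_0 \ge 1$) that typically drives stability proofs for such schemes, so the lemma slots directly into the subsequent stability analysis. No delicate cancellation, summation-by-parts, or positivity-of-the-whole-coefficient-sequence argument is required for this particular lemma — Lemma~\ref{lem3.1} is invoked only for the single fact $g_0 > 1$.
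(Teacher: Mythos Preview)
Your proof is correct and takes a genuinely different route from the paper's. The paper invokes the Gerschgorin circle theorem to locate the eigenvalues of $T$ in discs centred at $g_0+2\eta\epsilon$ with radius at most $2\eta\epsilon$, concludes that every eigenvalue of $T$ exceeds $1$ (using $g_0>1$ from Lemma~\ref{lem3.1}), and then passes from ``all eigenvalues of $T^{-1}$ have modulus at most $1$'' to $\|T^{-1}\|_\infty\le 1$. Your argument instead works directly in the $\infty$-norm via a discrete maximum-principle: pick the row where $|x_i|$ is largest, use the triangle inequality together with diagonal dominance, and read off $g_0\,\|x\|_\infty \le \|Tx\|_\infty$. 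Your route is more elementary and, in fact, logically tighter: the paper's final inference from a spectral-radius bound to an $\infty$-norm bound is not automatic in general (spectral radius only bounds an induced matrix norm from \emph{below}), whereas your argument targets the $\infty$-norm directly and even yields the sharper estimate $\|T^{-1}\|_\infty\le 1/g_0$. Both proofs ultimately rest on the single ingredient $g_0>1$, so the downstream use in the stability theorem is identical.
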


\begin{proof}
    The above-given statement can be proved with the help of the Gerschgorin theorem. It states that all the eigenvalues of the matrix $T$ should lie in the union of $K_x-1$ circles centered at $t_{i, i}$ with radius $r_i = \sum^{K_x-1}_{k=1,k\neq i} |t_{i,k}|$. We need to show that every eigenvalue of the matrix $T$ has a magnitude greater than 1. Hence, using the Gerschgorin theorem and the properties of matrix $T$, we have
    \begin{align*}
        &t_{i,i}=g_0+2\eta\epsilon,\ \ r_i=\sum^{K_x-1}_{k=1,k\neq i} |t_{i,k}|\leq 2\eta\epsilon,\\
        \text{ and so }\ &|\lambda-(g_0+2\eta\epsilon)|\leq 2\eta\epsilon.
    \end{align*}
    Since for $r=1$ from Lemma \ref{lem3.1}, we have $g_0\in(1,11/6)$, and therefore we conclude that the eigenvalues of the matrix $T$ have a magnitude greater than $1$. Also, the matrix $T$ is diagonally dominant. Thus $T$ is invertible and the eigenvalues of $T^{-1}$ are less than or equal to $1$ in magnitude. Therefore,
    \begin{equation*}
        \|T^{-1}\|_\infty\leq 1.
    \end{equation*}
\end{proof}
\noindent
Now the stability of the proposed difference scheme derived above is proved in the following theorem with the help of this lemma.

\begin{theorem}
    The proposed finite difference scheme defined by \eqref{h3}-\eqref{h5} to the TFDE \eqref{diff_eqn}-\eqref{ini_eqn} for $r=1$ is unconditionally stable.
\end{theorem}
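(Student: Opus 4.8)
The plan is a perturbation argument in the maximum norm. Let $\{\tilde u_i^n\}$ solve \eqref{h3}--\eqref{h5} with initial data $\tilde u_i^0=\varphi(x_i)+\rho_i^0$ and source values $\tilde f_i^n=f_i^n+\sigma_i^n$, and set $\rho_i^n=\tilde u_i^n-u_i^n$. By linearity of the scheme in the data, the error obeys the homogeneous recursion with these perturbations on the right; in the notation of \eqref{me1} this reads $T\rho^n=-\sum_{k=1}^n g_k\rho^{n-k}+\epsilon\,\sigma^n$, with $\rho^0$ prescribed. Since $\|T^{-1}\|_\infty\le1$ by the preceding lemma, we obtain $\|\rho^n\|_\infty\le\sum_{k=1}^n|g_k|\,\|\rho^{n-k}\|_\infty+\epsilon\|\sigma^n\|_\infty$. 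The objective is to upgrade this to $\|\rho^n\|_\infty\le C\big(\|\rho^0\|_\infty+\max_{1\le m\le n}\|\sigma^m\|_\infty\big)$ with $C$ independent of $h$ and $\tau$, which is exactly unconditional stability.

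First I would treat the initial levels $n=1,2$ (and $n=3,4,5$ if necessary) by hand, since there the scheme reduces to the low-order forms listed above and the discrete maximum principle applied to $T$ gives $\|\rho^n\|_\infty\le\max_{0\le m<n}\|\rho^m\|_\infty+\epsilon\|\sigma^n\|_\infty$ directly. For $n\ge3$, where Lemma~\ref{lem3.1} applies, I would induct on $E_n:=\max_{0\le m\le n}\|\rho^m\|_\infty$, using the sign pattern $g_0\in(1,\tfrac{11}{6})$, $g_2>0$, $g_j<0$ for the remaining $j\ge1$, together with $\sum_{j=0}^n g_j=0$ so that $\sum_{k=1}^n g_k=-g_0$. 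Crucially, the absolute-value bound is not enough: $\sum_{k=1}^n|g_k|=g_0+2g_2>1$, so estimating every lag by $E_{n-1}$ only yields geometric growth — the signs must be exploited, not just the magnitudes.

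The technical heart, and the step I expect to be the main obstacle, is absorbing the lone positive coefficient $g_2$. The natural device is to evaluate the difference equation at the index $i^{*}$ attaining $\|\rho^n\|_\infty$: from $(g_0+2\eta\epsilon)\rho^n_{i^{*}}-\eta\epsilon(\rho^n_{i^{*}+1}+\rho^n_{i^{*}-1})=-\sum_{k=1}^n g_k\rho^{n-k}_{i^{*}}+\epsilon\sigma^n_{i^{*}}$ and $|\rho^n_{i^{*}\pm1}|\le\|\rho^n\|_\infty$ one gets $g_0\|\rho^n\|_\infty\le-\sum_{k=1}^n g_k\rho^{n-k}_{i^{*}}+\epsilon\|\sigma^n\|_\infty$, where the lags $k\ne2$ contribute harmlessly (coefficient $-g_k>0$, bounded by $(-g_k)E_{n-1}$) but the term $-g_2\rho^{n-2}_{i^{*}}$ cannot be estimated by $g_2E_{n-1}$ without reintroducing growth. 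To close the induction I would invoke a finer structural property of $\{g_j\}$ taken from \cite{Cao_2015} — for instance positivity of the partial sums $\sum_{j=0}^\ell g_j$, equivalently a monotonicity of the normalized coefficients — together with a summation-by-parts rewriting of $\sum_{k=1}^n g_k\rho^{n-k}_{i^{*}}$ in terms of those partial sums and the increments $\rho^{m}_{i^{*}}-\rho^{m-1}_{i^{*}}$, turning the single positive coefficient into a telescoping term that the induction absorbs, so that $E_n\le E_{n-1}$ and, in the homogeneous case, $\|\rho^n\|_\infty\le\|\rho^0\|_\infty$. Alternatively one can switch to the $L^2$ energy method at this point — pairing $T\rho^n=-\sum_k g_k\rho^{n-k}+\epsilon\sigma^n$ with $\rho^n$, using positive definiteness of $-\delta_x^2$ and positive-semidefiniteness of the discrete fractional sum operator from \cite{Cao_2015}, and then a discrete Gr\"onwall inequality. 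The proof then concludes by folding the $\sigma^n$ terms into the estimate to obtain $\|\rho^n\|_\infty\le C\big(\|\rho^0\|_\infty+\max_{1\le m\le n}\|\sigma^m\|_\infty\big)$ with $C$ free of $h$ and $\tau$, i.e. unconditional stability.
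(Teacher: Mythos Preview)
Your setup --- the perturbation $\rho^n$ satisfying $T\rho^n=-\sum_{k=1}^n g_k\rho^{n-k}$ (the paper treats only initial-data perturbations, so effectively $\sigma^n\equiv0$), together with the bound $\|T^{-1}\|_\infty\le1$ from the preceding lemma --- is exactly the paper's. From that point on, however, the paper does precisely what you reject as insufficient: it estimates $\|\rho^{n+1}\|_\infty\le\sum_{k=1}^{n+1}|g_k|\,\|\rho^{n+1-k}\|_\infty$, invokes the induction hypothesis $\|\rho^s\|_\infty\le C\|\rho^0\|_\infty$ for $s\le n$, applies $\sum_{k\ge1}|g_k|=g_0+2g_2$ from Lemma~\ref{lem3.1}, and concludes with $\|\rho^{n+1}\|_\infty\le C(g_0+2g_2)\|\rho^0\|_\infty=:C_1\|\rho^0\|_\infty$. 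The induction is never closed with the \emph{same} constant, so the argument as written only delivers $\|\rho^n\|_\infty\le(g_0+2g_2)^n\|\rho^0\|_\infty$ --- exactly the geometric growth you flagged.

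In other words, you have correctly diagnosed a gap that is present in the paper's own proof. The paper does not exploit the sign structure beyond the crude identity $\sum_{k\ge1}|g_k|=g_0+2g_2$; your proposed refinements (positivity of the partial sums together with a summation-by-parts rewriting, or an $L^2$ energy estimate combined with a discrete Gr\"onwall inequality) are the right class of tools to repair this, and either would constitute a genuine strengthening of the published argument. You have only sketched them, though, and the finer coefficient inequality you would need for the maximum-norm route is not supplied in the paper or in \cite{Cao_2015}, so carrying either approach through would require additional work that neither source provides.
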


\begin{proof}
   Let $u_i^n$ and $\Tilde{u}_i^n$ be the approximate solutions of \eqref{h3} corresponding to different initial conditions. Therefore, $v_i^n=u_i^n-\Tilde{u}_i^n$ be another solution of \eqref{h3} where $v_i^0\neq 0$ and $v^n=(v_1^n,v_2^n,\ldots,v_{K_x-1}^n)^T$. The difference solution $v^n$ will also satisfy \eqref{me1} with $F^n=\textbf{0}$.
   \begin{equation*}
    Tv^n = -{\sum_{k=1}^n} g_k{v^{n-k}}.
\end{equation*}
From the technique of mathematical induction, we prove $\|v^n\|\leq C\|v^0\|,\ n=0,1,2,\ldots,K_t$, where C is some positive constant. Hence for $n=1$ in the above equation, we have
    \begin{align*}
        Tv^1&=-g_1v^0,\\
        \|v^1\|_\infty&\leq \left\|T^{-1}\right\|_\infty\ \left\|\left(-g_1v^0\right)\right\|_\infty,\\
        &\leq |-g_1|\ \left\|v^0\right\|_\infty.
        \end{align*}
        For $n=1$, we have $g_1=-g_0$ and $g_0=1$. So
         \begin{align*}
       \left\|v^1\right\|_\infty&\leq |g_0|\ \left\|v^0\right\|_\infty,\\
        \left\|v^1\right\|_\infty&\leq \left\|v^0\right\|_\infty.
    \end{align*}
   Now assume that $\|v^s\|_\infty\leq C\|v^0\|_\infty,\ \forall s\leq n$. We will prove it is also true for $s=n+1$. We have,
    \begin{align*}
     Tv^{n+1} &= -{\sum_{k=1}^{n+1}}
     g_k{v^{n+1-k}},\\
\left\|v^{n+1}\right\|_\infty&=\left\|T^{-1} \left(-\sum_{k=1}^{n+1}{g_k}v^{n+1-k}\right)\right\|_\infty,\\
    &\leq \left\|T^{-1}\right\|_\infty\  \left\|\left(-\sum_{k=1}^{n+1}{g_k}v^{n+1-k}\right)\right\|_\infty,\\
    &\leq \sum_{k=1}^{n+1}|{g_k}| \ \left\|v^{n+1-k}\right\|_\infty,\\
    &\leq C\left\|v^0\right\|_\infty\ \left(\sum_{k=1}^{n+1}|{g_k}|\right),
    \end{align*}
Using the identities $(1)$ and $(2)$ of Lemma \ref{lem3.1}, we get
    \begin{align*}
   \left\|v^{n+1}\right\|_\infty &\leq C(g_0+2g_2) \left\|v^0\right\|_\infty,\\
    &\leq C_1\left\|v^0\right\|_\infty.
    \end{align*}
    Hence the scheme is unconditionally stable.
\end{proof}

\begin{theorem}
  The solutions $u(x_i,t_n)$ of the TFDE \eqref{diff_eqn}-\eqref{ini_eqn} and $u_i^n$ of the difference scheme \eqref{h3}-\eqref{h5} at grid point $(x_i,t_n)$, for $r=1$ and sufficiently small $h$ and $\tau$, satisfy
 \begin{equation}
 \|e^n\|_\infty= \mathcal{O}\big({h^2}+\tau^{\alpha}\big),\ n=1,2,\ldots,K_t,
 \end{equation}
 where $e_i^n = u(x_i,t_n)-u_i^n$, $i=1,2,\ldots,K_x-1$.  
\end{theorem}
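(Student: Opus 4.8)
The plan is to prove convergence by the standard error-equation argument, combining the truncation error estimate of Lemma~\ref{trunclemma} (in the special case $r=1$, where it gives $\mathcal{O}(\tau^{\min\{4-\alpha,\alpha\}})=\mathcal{O}(\tau^{\alpha})$ since $\alpha<4-\alpha$ for $\alpha\in(0,1)$) with the stability machinery just established. First I would substitute the exact solution $U_i^n=u(x_i,t_n)$ into the discrete operator $L_{K_x,K_t}$ and write $L_{K_x,K_t}U_i^n=f_i^n+R_i^n$, where the consistency residual $R_i^n$ splits into the temporal part (bounded by $Cn^{-\alpha}\le C\tau^{\alpha}$, uniformly in $n$, by Lemma~\ref{trunclemma} with $r=1$) and the spatial part $\mathcal{O}(h^2)$ from \eqref{space_eqn}. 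Subtracting the scheme \eqref{h3} satisfied by $u_i^n$, the error $e_i^n=U_i^n-u_i^n$ satisfies the \emph{same} linear system as in the stability proof but with a nonzero forcing term: in the rescaled (uniform-mesh) notation with $\eta=\varrho/h^2$, $\epsilon=\tau^{\alpha}\Gamma(2-\alpha)$, and $g_j=\tau^{\alpha}p_j$, we get
\begin{equation*}
Te^n=-\sum_{k=1}^n g_k e^{n-k}+\epsilon R^n,\qquad e^0=0,
\end{equation*}
with $T=tri[-\eta\epsilon,\,g_0+2\eta\epsilon,\,-\eta\epsilon]$ and $\|R^n\|_\infty\le C(h^2+\tau^{\alpha})$.

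Next I would run the induction on $n$ exactly as in the stability theorem, now carrying the extra inhomogeneous term. Using $\|T^{-1}\|_\infty\le 1$ from the preceding lemma, applying the triangle inequality and the inductive hypothesis $\|e^s\|_\infty\le C_\ast\,\epsilon\max_{1\le m\le n}\|R^m\|_\infty$ for all $s\le n$, one obtains
\begin{equation*}
\|e^{n+1}\|_\infty\le \Big(\sum_{k=1}^{n+1}|g_k|\Big)\,C_\ast\,\epsilon\max_m\|R^m\|_\infty+\epsilon\|R^{n+1}\|_\infty .
\end{equation*}
By Lemma~\ref{lem3.1}, for $n\ge 3$ we have $\sum_{j=0}^{n}g_j=0$ with $g_2>0$ and $g_j<0$ for $j\ge1,j\ne2$, so $\sum_{k=1}^{n+1}|g_k|=g_0+2g_2$, a fixed constant in $(1,11/6+\text{const})$ independent of $n$ and $\tau$; the low-index cases $n=1,2$ are checked directly as in the stability proof. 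Thus the recursion closes provided the constant $C_\ast$ is chosen with $C_\ast\ge (g_0+2g_2)C_\ast/\ $ ... — more carefully, I would instead keep the accumulation explicit, bounding $\|e^n\|_\infty\le \epsilon\sum_{k=1}^{n}\beta_{n,k}\|R^k\|_\infty$ and showing the weights $\beta_{n,k}$ sum to a discrete analogue of $Ct_n^{\alpha}/\epsilon = C n^{\alpha}\tau^{\alpha}/(\tau^{\alpha}\Gamma(2-\alpha))$, so that $\|e^n\|_\infty\le C t_n^{\alpha}\max_k\|R^k\|_\infty\le CT^{\alpha}(h^2+\tau^{\alpha})$, giving the claim uniformly for $n=1,\dots,K_t$.

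The main obstacle is the last point: the naive induction with a single constant $C$ does not close because the multiplier $g_0+2g_2>1$, so one cannot simply write $\|e^{n+1}\|_\infty\le C\|v^0\|_\infty$ as in the homogeneous stability argument — the inhomogeneity must be accumulated, and one needs the cancellation $\sum g_j=0$ to prevent the accumulated factor from growing geometrically in $n$. The clean way is a discrete Gr\"onwall / Grenander–Szeg\H{o}-type estimate for the kernel $\{g_j\}$: since $g_0>0$, $g_j\le 0$ for $j\ge1,j\ne2$, $g_2>0$ small, and $\sum_{j=0}^\infty g_j=0$, the discrete fractional operator $\sum_k g_k e^{n-k}$ is a positive-type convolution and its inverse has nonnegative, summable-to-$\mathcal{O}(n^{\alpha})$ weights; invoking this (or citing the corresponding discrete Gr\"onwall inequality) converts the per-step bound $\|R^n\|_\infty\le C(h^2+\tau^\alpha)$ into $\|e^n\|_\infty\le CT^\alpha(h^2+\tau^\alpha)$. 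For smooth solutions one notes $R_i^n=\mathcal{O}(h^2+\tau^{4-\alpha})$ by Remark~\ref{rem_1} and the same argument yields $\mathcal{O}(h^2+\tau^{4-\alpha})$, but for the stated non-smooth regime the $\tau^{\alpha}$ term dominates, completing the proof.
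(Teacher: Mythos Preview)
Your error-equation setup and the first pass at induction are exactly what the paper does: the paper writes $L_{K_x,K_t}e_i^n=r_i^n$, rewrites it in the tridiagonal form with the matrix $T$, and then runs the same naive induction you describe, bounding $|e_l^n|\le \sum_{k=1}^n|g_k|\,|e_l^{n-k}|+|r_l^n|$ and invoking $\sum_{k\ge1}|g_k|=g_0+2g_2$ from Lemma~\ref{lem3.1}. The paper then simply asserts $\|e^n\|_\infty\le M_0(h^2+\tau^\alpha)$ with shifting constants $M,M_1,M_0$, without addressing the closure issue you raise. In that sense you have reproduced the paper's argument and, in addition, correctly diagnosed the point it glosses over: since $g_0+2g_2>1$, the induction constant is not preserved from step $n-1$ to step $n$. (You are also more careful than the paper in keeping the $\epsilon$ factor in front of the forcing; the paper drops it in the displayed error recursion.)

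Where your own sketch still needs work is in the Gr\"onwall repair. The claim ``$n^{-\alpha}\le C\tau^\alpha$ uniformly in $n$'' is false (at $n=1$ it equals $1$), so $\|R^n\|_\infty$ is \emph{not} uniformly $O(h^2+\tau^\alpha)$; it is only $\epsilon\|R^n\|_\infty=\tau^\alpha\Gamma(2-\alpha)\,O(h^2+n^{-\alpha})$ that is uniformly $O(h^2+\tau^\alpha)$. Consequently, pulling out $\max_k\|R^k\|_\infty$ and multiplying by $\sum_k\beta_{n,k}\sim n^\alpha$ gives a bound that grows like $n^\alpha$, not the uniform estimate you want. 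The fix is to keep the $k$-dependence of the truncation error inside the sum and use the discrete convolution estimate $\sum_{k=1}^n(n-k+1)^{\alpha-1}k^{-\alpha}=O(1)$ (a discrete beta-function bound), which is what makes the accumulated temporal error stay $O(\tau^\alpha)$; the $h^2$ part then picks up only the harmless factor $t_n^\alpha\le T^\alpha$. With that refinement your Gr\"onwall route is a genuine and more rigorous alternative to the paper's bare induction.
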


\begin{proof}
   Let us fix $(x_i,t_n)\in \Omega$. Using \eqref{h1}, \eqref{h4}-\eqref{h5}, we obtain the following difference equation for the error
 \begin{equation*}
 \begin{aligned}
 &L_{K_x,K_t}e_i^n=r_i^n,~1\leq i\leq K_x-1,\ 1\leq n\leq K_t,\\
 &e_0^n = 0 = e_{K_x}^n,~0<n\leq K_t,\\
 &e_i^0 = 0,~0\leq i\leq K_x-1.
 \end{aligned}
 \end{equation*}
 We have from our error equation,  \eqref{space_eqn} and Lemma \ref{trunclemma},
 \begin{equation*}
 \begin{aligned}
|r_i^n|=|L_{K_x,K_t}e_i^n|&=\big|\textsubscript{C}D_{0,K_t}^{\alpha}e_i^n-\delta_x^2e_i^n\big|,\\
 &=\Big|\textsubscript{C}D_{0,K_t}^{\alpha}\left[u(x_i,t_n)-u_i^n\right]-\delta_x^2\left[u(x_i,t_n)-u_i^n\right]\Big|,\\
 &\hspace{.1cm}\leq C\big({h^2}+n^{-\min\{4-\alpha,r\alpha\}}\big),
 \end{aligned}
 \end{equation*} 
For $r=1$, it will convert into
 \begin{equation*}
 \begin{aligned}
 |r_i^n|&\leq C\big({h^2}+n^{-\min\{4-\alpha,\alpha\}}\big),\\
 &= C\big({h^2}+n^{-\alpha}\big).
 \end{aligned}
 \end{equation*}
Now for $n=1$, we have from error equation
\begin{align*}
     &-\eta\epsilon e_{i+1}^1+(g_0+2\eta\epsilon)e_i^1-\eta\epsilon e_{i-1}^1= r_i^1,
\end{align*}
And for $n>1$
\begin{equation*}
     -\eta\epsilon e_{i+1}^n+(g_0+2\eta\epsilon)e_i^n-\eta\epsilon e_{i-1}^n+\sum_{k=1}^{n}{g_k}e_i^{n-k}= r_i^n,
\end{equation*}
We use the mathematical induction method to prove the convergence of the scheme. If $n=1$, we choose $|e_l^1|=\max_{1\leq i\leq K_x-1} |e_i^1|$ and hence,
\begin{align*}
   |e_l^1|&\leq (g_0+2\eta\epsilon) |e_l^1|-\eta\epsilon|e_{l+1}^1|-\eta\epsilon|e_{l-1}^1|,\\
   &\leq |(g_0+2\eta\epsilon) e_l^1-\eta\epsilon e_{l+1}^1-\eta\epsilon e_{l-1}^1|,\\
   &=| r_l^1| \leq C({h^2}+\tau^{\alpha}),\\
   \|e^1\|_\infty&\leq C({h^2}+\tau^{\alpha}).
\end{align*}
Suppose that if $s\leq n-1$, $||e^s||_\infty\leq M({h^2}+\tau^{\alpha})$ holds, then for $s=n$, let $|e_l^{n}|=\max_{1\leq i\leq K_x-1} |e_i^{n}|$, we have

\begin{align*}
    |e_l^{n}|&\leq (g_0+2\eta\epsilon) |e_l^{n}|-\eta\epsilon|e_{l+1}^{n}|-\eta\epsilon|e_{l-1}^{n}|,\\
   &\leq \left|(g_0+2\eta\epsilon) e_l^{n}-\eta\epsilon e_{l+1}^{n}-\eta\epsilon e_{l-1}^{n}\right|,\\
   &\leq \left|-\sum_{k=1}^{n}{g_k}e_l^{n-k}+ r_l^{n}\right|,\\
   &\leq \sum_{k=1}^{n}|{g_k}||e_l^{n-k}|+ |r_l^{n}|,\\
   &\leq M_1({h^2}+\tau^{\alpha})\left(\sum_{k=1}^{n}|{g_k}|+1\right).
   \end{align*}
   Similar as above, using the identities $(1)$ and $(2)$ of Lemma \ref{lem3.1}, we get
\begin{align*}
    |e_l^{n}|&\leq (1+g_0+2g_2)M_1({h^2}+\tau^{\alpha}).
\end{align*}
Thus $\|e^{n}\|_\infty\leq  M_0\big({h^2}+\tau^{\alpha}\big)$.
\end{proof}

\begin{remark}\label{rem_2}
As discussed in Remark \ref{rem_1}, we know from \cite[Theorem 4.1]{Cao_2015} that for the TFDE problem \eqref{diff_eqn}-\eqref{ini_eqn} having a smooth solution, i.e., $u(\cdot,t)\in C^4[0,T]$ on uniform mesh, the local truncation error for the approximation of Caputo fractional derivative is $\mathcal{O}(\tau^{4-\alpha})$. 
Therefore, keeping in view the proof of the above theorem, one can deduce the rate of convergence for the proposed scheme \eqref{h3}-\eqref{h5} in case of smooth solution on a uniform mesh as $\mathcal{O}(\tau^{4-\alpha}+h^2)$. Also, the stability result in \cite{Cao_2015} is proved via the Fourier series method for $\alpha\in(0,\bar{\bar{\bar{\alpha}}}\approx0.368)$, whereas we have shown the stability through the matrix method for any $\alpha\in (0,1)$.
\end{remark}

\begin{remark}\label{rem_3}
The above stability and convergence results for the difference scheme \eqref{h3}-\eqref{h5} have been proved for $r=1$ i.e., on a uniform mesh. The rationale behind it is that the relation between the coefficients $p_j$ for $r=1$, i.e. $g_j$, is known, but for $r>1$, the behavior of the coefficients becomes very complex. From the table below we can see that the signum behavior of the coefficients fluctuates for different $r$. For the same number of iterations, the number of positive coefficients varies for different values of $r$ which indicates some of the coefficients are changing their signs. This behavioral change also depends on $n$ as the change in the number of iterations for a fix $r$ changes the number of coefficients of a particular sign. Thus, the behavior of coefficients alters for different $n$ and $r$ for a fixed value of $\alpha$ (here $\alpha=0.5$) and hence we are unable to conclude about the stability of the scheme for $r>1$.
\end{remark}

\begin{table}[htbp]
\begin{center}
\begin{tabular}{c|c c|c c|c c}
\hline
 &\multicolumn{2}{|c|}{$r=4$} &\multicolumn{2}{|c|}{$r=6$} &\multicolumn{2}{|c}{$r=10$}\\
\cline{2-7} 
\textbf{$n$} & +ve sign & -ve sign & +ve sign & -ve sign & +ve sign & -ve sign\\

\hline
$10$ & $4$ & $7$ & $5$ & $6$ & $4$ &  $7$\\
$20$ & $4$ & $17$  & $5$ & $16$ & $6$ & $15$ \\
$30$ & $3$ & $28 $ & $6$ & $25$ & $9$ & $22$ \\
$40$ & $4$ & $37$ & $7$ & $34$ & $12$ & $29$\\
$50$ & $6$ & $45$ & $9$ & $42$ & $16$ & $35$\\
\end{tabular}
\caption{Number of +ve and -ve coefficients $(p_j)$ for $N=50$ and $\alpha=0.5$ for different values of $n$.}
\label{table:9}
\end{center}
\end{table}


\section{Numerical experiments}\label{results}\noindent
In this section, we verify the theoretical analysis provided in the previous section by producing numerical results for two test examples of TFDE problem \eqref{diff_eqn}-\eqref{ini_eqn} with smooth and non-smooth solutions, respectively, using the proposed difference scheme \eqref{h3}-\eqref{h5}. We examine the accuracy and efficiency of the scheme for different values of $K_x$, $K_t$, $\alpha$, and $r$. The exact solution to each case is known in advance so that the results can be compared and the convergence rate is verified. The following error norm is used to check the accuracy of the proposed scheme:
\begin{equation}\label{err_norm}
E_\infty(K_t,K_x)=\max_{1\leq i\leq K_x-1}\left|u(x_i,t_{K_t})-u_i^{K_t}\right|.
\end{equation}
Using the error norm defined above, we denote the numerical convergence orders in time and space by $T_{rate}$ and $S_{rate}$, respectively, and are defined as
\begin{equation}\label{rate}
    T_{rate} = \log_2\bigg(\frac{E_\infty(K_t/2,K_x)}{E_\infty(K_t,K_x)}\bigg), \ \
S_{rate} = \log_2\bigg(\frac{E_\infty(K_t,K_x/2)}{E_\infty(K_t,K_x)}\bigg).
\end{equation}
Here, $K_x$ and $K_t$ are correspondingly the total numbers of subintervals in space and time directions.\par
We begin by analysing the error of the considered problem \eqref{diff_eqn}-\eqref{ini_eqn} through an example having a smooth solution, i.e., $u(\cdot,t)\in C^4[0,T]$.

\begin{example}\label{ex1}
Let us consider TFDE problem \eqref{diff_eqn} with the following data:
\begin{equation*}
\begin{aligned}
u&(0,t) = u(\pi,t) = 0,~~~0 < t \leq 1,\\
&u(x,0) = 0,~~~0\leq x\leq\pi.
\end{aligned}
\end{equation*}
Here $f(x,t)=t^5\sin x \big(1+\frac{120}{\Gamma(6-\alpha)}t^{-\alpha}\big)$ and thus, the exact solution is $u(x,t)= t^5\sin x$.
\end{example}\noindent
Clearly, the above example has a smooth solution throughout the time interval i.e., $u(\cdot,t)\in C^4[0,T]$. Therefore we have calculated the errors on a uniform mesh, i.e., $r=1$, using the relation \eqref{err_norm} at final time $T=1$. Numerical results are given in Table \ref{table:1} and \ref{table:2} for different numbers of temporal splits with two different values of diffusion coefficient $\varrho=1,30$; fixing $K_x=10000$. The results indicate that the maximum nodal errors are minimal and the convergence order of scheme \eqref{h1}-\eqref{h3} for a smooth solution is $(4-\alpha)$, which is the expected rate as discussed in Remark \ref{rem_2}. It is also observed that as the fractional order $\alpha$ increases, simultaneously, the magnitudes of the maximum errors increase and the convergence rate decreases. One can note in Table~\ref{table:2} that with a high diffusion coefficient, the errors are relatively smaller, which specifies that a higher diffusion coefficient fastens the physical process.

\begin{table}[htbp]
	\centering
	\begin{tabular}{c| c c| c c| c c}
		\hline
		&\multicolumn{2}{|c|}{$\alpha=0.3$} &\multicolumn{2}{|c|}{$\alpha=0.6$} &\multicolumn{2}{|c}{$\alpha=0.8$}\\
		\cline{2-7} 
		$K_t$  & $E_\infty$ & Order & $E_\infty$ & Order & $E_\infty$ & Order\\ \hline
		$10$ & $4.3693e-04$ &  & $1.9000e-03$ &  & $4.3000e-03$ & \\  
		$20$ & $3.8295e-05$ & $3.5122$ & $2.0232e-04$ & $3.2313$ & $5.2108e-04$ & $3.0448$\\ 
		$40$ & $3.1936e-06$ & $3.5839$ & $2.0257e-05$ & $3.3201$ & $5.9698e-05$ & $3.1258$\\ 
		$80$ & $2.6190e-07$ & $3.6081$  & $1.9790e-06$ & $3.3556$ & $6.6679e-06$ & $3.1624$\\  
		$160$ & $2.0187e-08$ & $3.6975$  & $1.9197e-07$ & $3.3658$ & $7.3596e-07$ & $3.1795$\\      
	\end{tabular}
	\caption{Maximum temporal errors and  order of convergence $(T_{rate})$ for $r=1$ and $\varrho=1$.}
	\label{table:1}
\end{table}
\begin{table}[htbp]
    \centering
    \begin{tabular}{c| c c| c c| c c}
     \hline
&\multicolumn{2}{|c|}{$\alpha=0.3$} &\multicolumn{2}{|c|}{$\alpha=0.6$} &\multicolumn{2}{|c}{$\alpha=0.8$}\\
\cline{2-7} 
   $K_t$   & $E_\infty$ & Order & $E_\infty$ & Order & $E_\infty$ & Order\\ \hline
   $10$ & $3.0835e-05$ &  & $1.4990e-04$ &  & $3.6707e-04$ & \\  
   $20$ & $2.6811e-06$ & $3.5237$ & $1.5458e-05$ & $3.2776$ & $4.3006e-05$ & $3.0934$\\ 
   $40$ & $2.3099e-07$ & $3.5369$ & $1.5369e-06$ & $3.3303$ & $4.8561e-06$ & $3.1467$\\ 
   $80$ & $1.9091e-08$ & $3.5969$  & $1.5544e-07$ & $3.3056$ & $5.4559e-07$ & $3.1539$\\  
   $160$ & $1.4818e-09$ & $3.6875$  & $1.5646e-08$ & $3.3125$ & $6.6199e-08$ & $3.0421$\\      
    \end{tabular}
    \caption{Maximum temporal errors and order of convergence $(T_{rate})$ for $r=1$ and $\varrho=30$.}
    \label{table:2}
\end{table}

 The graphical illustration of comparison between the exact solution and numerical solution of Example \ref{ex1} on uniform mesh with diffusion coefficient $\varrho=1$ and $\varrho=30$ is given in Figure \ref{fig1} and Figure \ref{fig2}, respectively.

\begin{figure}[htbp]
	\begin{center}
		\centering
		\subfigure[]{%
			\includegraphics[scale=0.35]{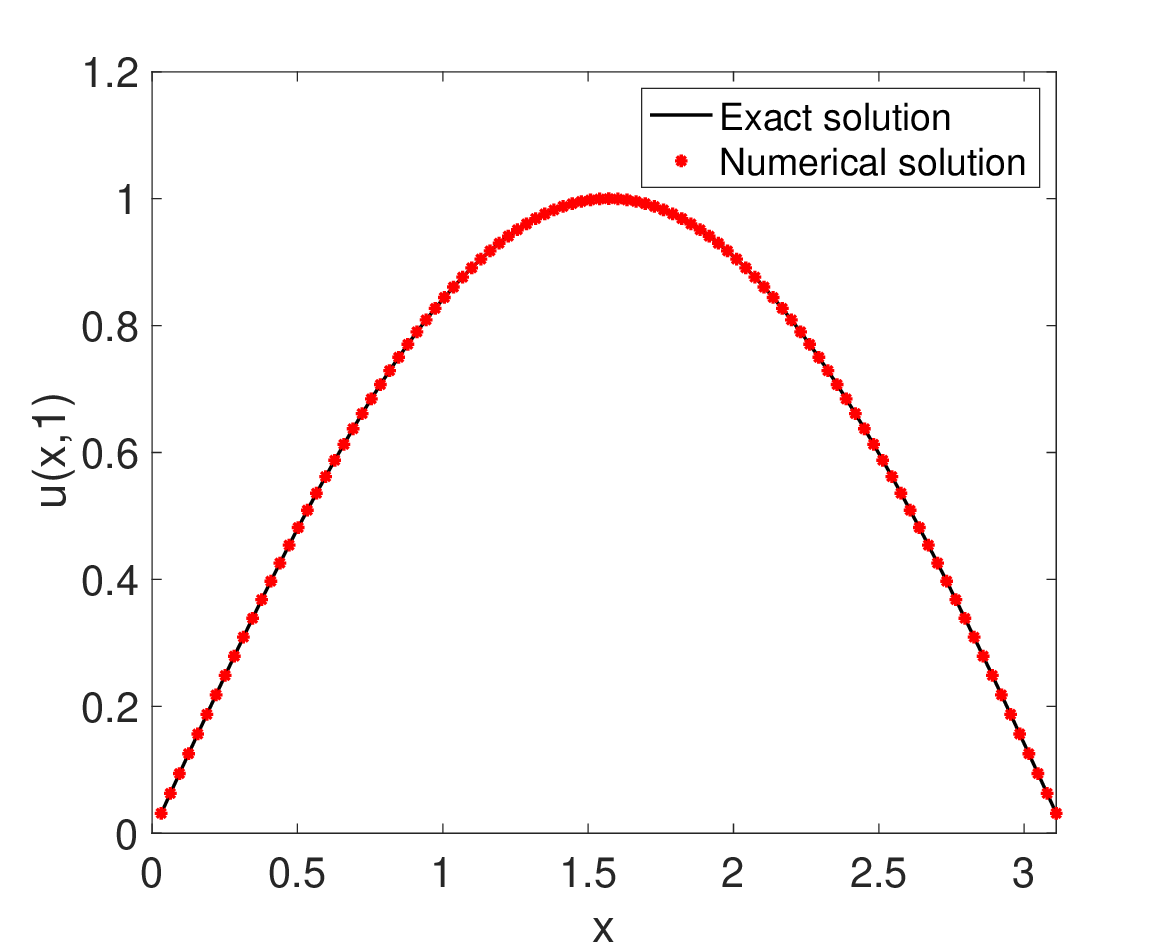}
			\label{fig1}} \quad
			\subfigure[]{%
			\includegraphics[scale=0.36]{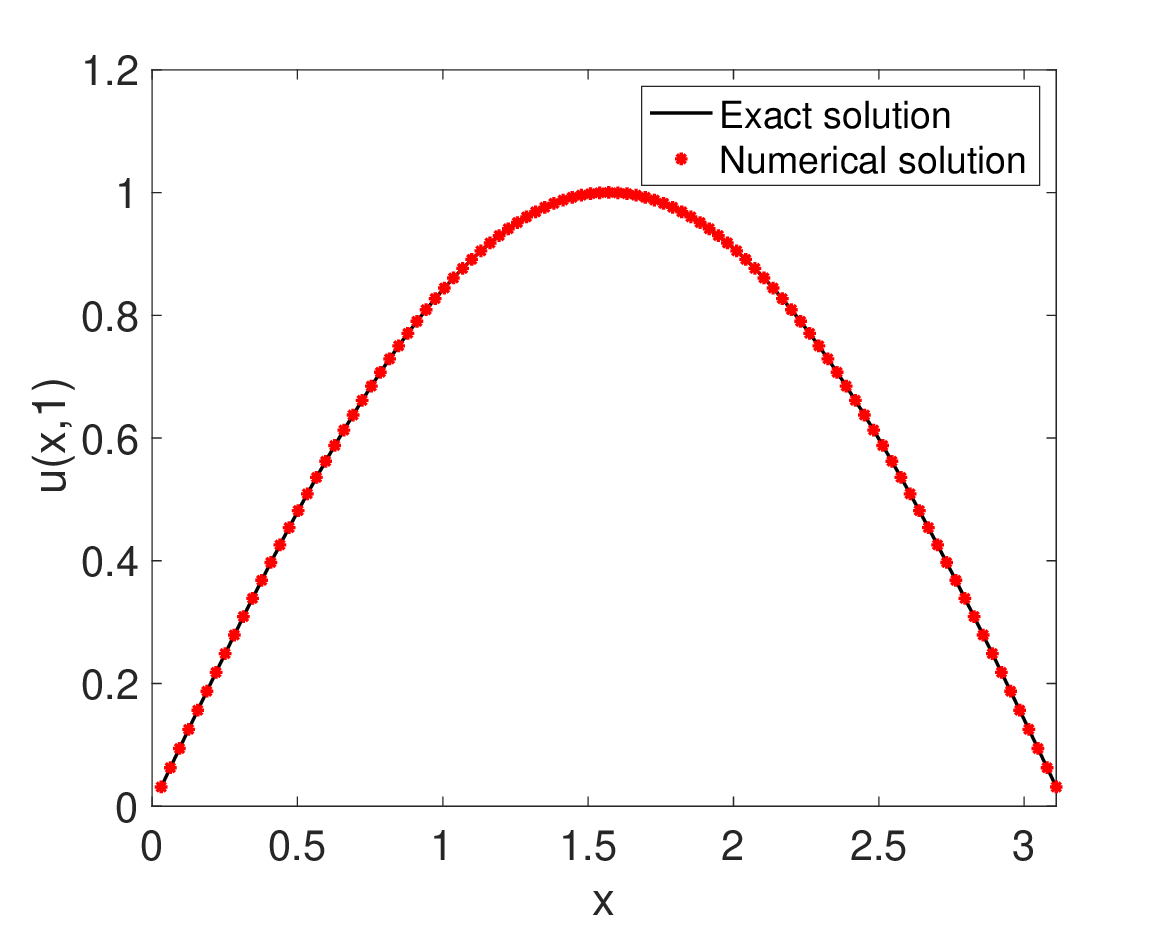}
			\label{fig2}} 
   
       \caption{Example \ref{ex1}. (a) Exact solution and numerical solution for $\alpha=0.3$ and $\varrho=1$; (b) Exact solution and numerical solution for $\alpha=0.3$ and $\varrho=30$.} 
   \label{figex1}
	\end{center}
\end{figure}

Table~\ref{table:3} displays that $\mathcal{O}(h^2)$ is the rate of convergence in spatial direction by fixing $K_t=1000$ and changing the numbers of splits of spatial interval.

\begin{table}[htbp]
    \centering
    \begin{tabular}{c| c c| c c| c c}
    \hline
   &\multicolumn{2}{|c|}{$\alpha=0.3$} &\multicolumn{2}{|c|}{$\alpha=0.6$} &\multicolumn{2}{|c}{$\alpha=0.8$}\\
\cline{2-7} 
   $K_x$   & $E_\infty$ & Order & $E_\infty$ & Order & $E_\infty$ & Order\\ \hline
   $10$ & $3.1000e-03$ &  & $2.1000e-03$ &  & $1.6000e-03$ & \\ 
   $20$ & $7.6951e-04$ & $2.0103$ & $5.3696e-04$ & $1.9675$ & $4.0646e-04$ & $1.9769$\\ 
   $40$ & $1.9239e-04$ & $1.9999$ & $1.3427e-04$ & $1.9997$ &$1.0165e-04$ & $1.9995$\\ 
   $80$ & $4.8097e-05$ & $2.0000$ & $3.3570e-05$ & $1.9999$ &$2.5416e-05$ & $1.9998$\\  
   $160$ & $1.2024e-05$ & $2.0000$ & $8.3930e-06$ & $1.9999$ &$6.3558e-06$ & $1.9996$\\    
    \end{tabular}
    \caption{Maximum spatial errors and order of convergence $(S_{rate})$ for $r=1$ and $\varrho=1$.}
    \label{table:3}
\end{table}

Next, we perform another numerical examination for TFDE problem \eqref{diff_eqn}-\eqref{ini_eqn} having a non-smooth behavior of the solution at initial time moment, i.e., $u(\cdot,t)\notin C^4[0,T]$. We observe the maximum nodal errors for different numbers of time intervals and confirm our theoretical convergence analysis. We also analyse the spatial convergence so that $\mathcal{O}(h^2+\tau^{\min\{4-\alpha,r\alpha\}})$ is proved numerically as the rate of convergence for the proposed scheme \eqref{h3}-\eqref{h5}.

\begin{example}\label{ex_2}
Let us consider the TFDE problem \eqref{diff_eqn} with following data:
\begin{equation*}
\begin{aligned}
u&(0,t) = u(1,t) = 0,~~~0 < t \leq 1,\\
&u(x,0) = 0,~~~0\leq x\leq1.
\end{aligned}
\end{equation*}
Here $f(x,t)=t^2\sin \pi x \big(\frac{\Gamma(3+\alpha)}{2}+\pi^2t^\alpha\big)$, and $u(x,t)= t^{2+\alpha}\sin \pi x$ is the exact solution. We have considered the value of  diffusion coefficient $\varrho$ to be 1 in this example. It is evident here that the exact solution has an initial-time singular behaviour, i.e., $u(\cdot,t)\notin C^4[0,T]$. 
\end{example}\noindent
We numerically solve the problem by keeping $K_x=10000$ fixed and taking different splits of temporal intervals on a uniform $(r=1)$ and graded mesh $(r=\frac{4-\alpha}{\alpha})$ using our newly developed scheme \eqref{h3}-\eqref{h5}. The maximum nodal errors \eqref{err_norm} and convergence orders \eqref{rate} using the proposed difference scheme are displayed in tables below at final time $t_{K_t} = T = 1$. Results in Table \ref{table:4} are calculated on the uniform mesh, which shows that the errors are minimal, but the convergence rate is not desired. It is clear from the results that Example \ref{ex_2} with a non-smooth solution on uniform mesh has error convergence of order $(\alpha)$, which is much less than the order of convergence $(4-\alpha)$ of the previous example having a smooth solution. \par
In Table \ref{table:5}, the numerical results agree precisely with the theoretical rate of convergence proved in Lemma \ref{trunclemma} using our proposed numerical scheme  \eqref{h3}-\eqref{h5}. We can see that the maximum errors are nominal and the optimal convergence order of $(4-\alpha)$ is achieved for the choice of grading constant $r=(4-\alpha)/\alpha$. In fact, for a choice of $r\geq (4-\alpha)/\alpha$, we can always get the optimal convergence order of $(4-\alpha)$.

\begin{table}[ht]
    \centering
    \begin{tabular}{c| c c| c c| c c}
     \hline
  &\multicolumn{2}{|c|}{$\alpha=0.3$} &\multicolumn{2}{|c|}{$\alpha=0.6$} &\multicolumn{2}{|c}{$\alpha=0.8$}\\
\cline{2-7} 
  $K_t$ & $E_\infty$ & Order & $E_\infty$ & Order & $E_\infty$ & Order\\ \hline
   $10$ & $2.6610e-06$ & & $2.5619e-06$ &  & $5.4254e-07$ &   \\  
   $20$ & $3.0269e-07$ & $3.1361$ & $3.0002e-07$ & $3.0932$ & $1.1337e-07$ & $2.2587$ \\ 
   $40$ & $3.9856e-08$ & $2.9250$ & $4.0843e-08$ & $2.8769$ & $2.6454e-08$ & $2.0995$ \\ 
   $80$ & $1.2020e-08$ & $1.7294$ & $1.0281e-08$ & $1.9901$  & $8.8073e-09$ & $1.5867$ \\  
   $160$ & $9.3683e-09$ & $0.3596$ & $6.6545e-09$ & $0.6276$  & $5.0582e-09$ & $0.8001$ \\      
    \end{tabular}
    \caption{Maximum temporal errors and order of convergence $(T_{rate})$ for $r=1$.}
    \label{table:4}
\end{table}

\begin{table}[ht]
    \centering
    \begin{tabular}{c| c c| c c| c c}
    \hline
      &\multicolumn{2}{|c|}{$\alpha=0.4$} &\multicolumn{2}{|c|}{$\alpha=0.6$} &\multicolumn{2}{|c}{$\alpha=0.8$}\\
\cline{2-7} 
   $K_t$   & $E_\infty$ & Order & $E_\infty$ & Order & $E_\infty$ & Order\\ \hline
   $10$ & $9.7738e-04$ &  & $7.2723e-04$ &  & $4.3321e-04$ & \\  
   $20$ & $1.4429e-04$ & $2.7599$ & $9.7544e-05$ & $2.8983$ & $5.8820e-05$ & $2.8807$\\ 
   $40$ & $1.5987e-05$ & $3.1740$ & $1.0963e-05$ & $3.1534$ &$7.1073e-06$ & $3.0489$\\ 
   $80$ & $1.5462e-06$ & $3.3701$ & $1.1312e-06$ & $3.2767$ &$8.0959e-07$ & $3.1340$\\  
   $160$ & $1.2864e-07$ & $3.5873$ & $1.0580e-07$ & $3.4184$ &$8.4873e-08$ & $3.2538$\\    
    \end{tabular}
    \caption{Maximum temporal errors and order of convergence $(T_{rate})$ for $r=\frac{4-\alpha}{\alpha}$.}
    \label{table:5}
\end{table}

The graphical illustration of comparison between the exact solution and numerical solution of Example \ref{ex_2} with diffusion coefficient $\varrho=1$ on uniform mesh $r=1$ and graded mesh $r=\frac{4-\alpha}{\alpha}$ is given in Figure \ref{fig3} and Figure \ref{fig4}, respectively.

\begin{figure}[htbp]\
	\begin{center}
		\centering
		\subfigure[]{%
			\includegraphics[scale=0.36]{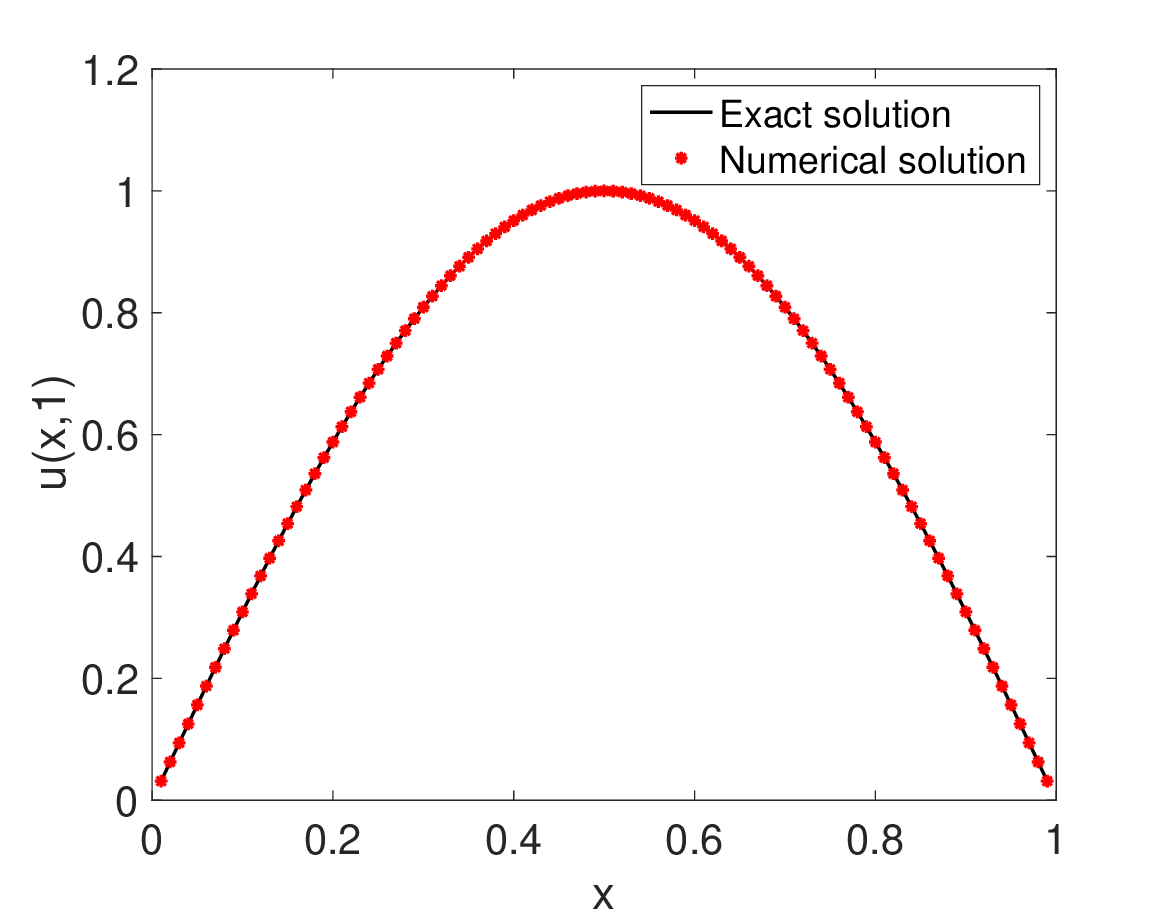}
			\label{fig3}} \quad
			\subfigure[]{%
			\includegraphics[scale=0.35]{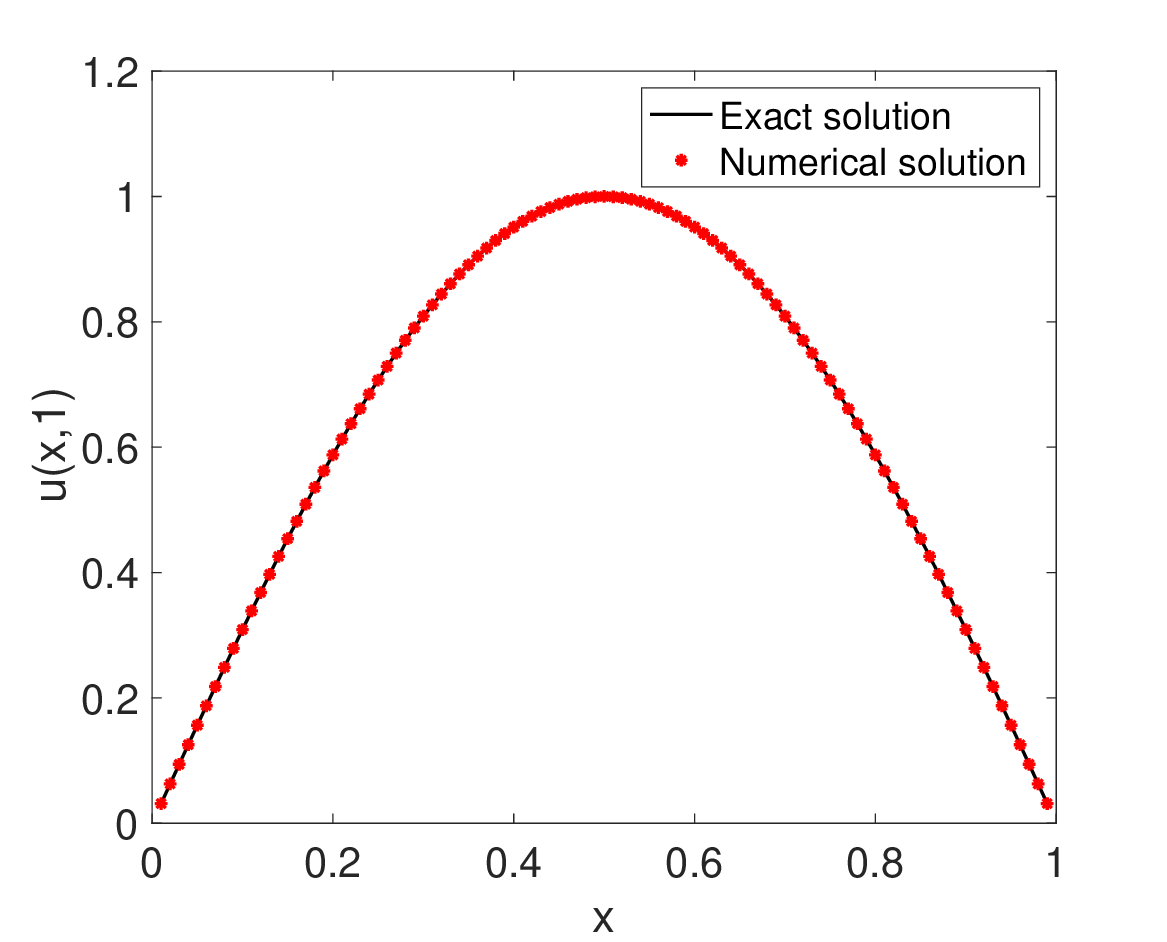}
			\label{fig4}} 
       \caption{Example \ref{ex_2}. (a) Exact solution and numerical solution for $\alpha=0.3$ and $r=1$; (b) Exact solution and numerical solution for $\alpha=0.3$ and $r=\frac{4-\alpha}{\alpha}$.} 
   \label{figEx1}
	\end{center}
\end{figure}

Table \ref{table:6} is the display of convergence in the spatial direction which shows that $\mathcal{O}(h^2)$ convergence rate is attained. This evaluation is done keeping $K_t=1000$ fixed and taking different splits of space interval.
  \begin{table}[ht]
    \centering
    \begin{tabular}{c| c c| c c| c c}
    \hline
  &\multicolumn{2}{|c|}{$\alpha=0.3$} &\multicolumn{2}{|c|}{$\alpha=0.6$} &\multicolumn{2}{|c}{$\alpha=0.8$}\\
\cline{2-7} 
   $K_x$   & $E_\infty$ & Order & $E_\infty$ & Order & $E_\infty$ & Order\\ \hline
   $10$ & $7.2000e-03$ &  & $6.9000e-03$ &  & $6.6000e-03$ & \\  
   $20$ & $1.8000e-03$ & $2.0000$ & $1.7000e-03$ & $2.0211$ & $1.6000e-03$ & $2.0444$\\ 
   $40$ & $4.4627e-04$ & $2.0120$ & $4.3120e-04$ & $1.9791$ &$4.1171e-04$ & $1.9584$\\ 
   $80$ & $1.1158e-04$ & $1.9998$ & $1.0778e-04$ & $2.0003$ &$1.0291e-04$ & $2.0002$\\  
   $160$ & $2.7924e-05$ & $1.9985$ & $2.6944e-05$ & $2.0001$ &$2.5727e-05$ & $2.0000$\\    
    \end{tabular}
    \caption{Maximum spatial errors and order of convergence $(S_{rate})$ for $r=1$.}
    \label{table:6}
\end{table}
\section{Conclusion}\label{conclusion}\noindent
In this paper, we derive a new high order approximation to the Caputo fractional derivative of order $\alpha$, $0<\alpha<1$, over non-uniform mesh. The algorithm applied here is based on the approximation of the integrand of the Caputo derivative by the Lagrange interpolating polynomials. This algorithm relies on the solution being continuously differentiable up to fourth order. However, the considered Caputo-type TFDE problem \eqref{diff_eqn}-\eqref{ini_eqn} typically has a solution that exhibits initial-time singularity, i.e. $u(\cdot,t)\notin C^4[0,T]$. Therefore the discretization on graded mesh produces a local truncation error of order $\min\{4-\alpha, r\alpha\}$, $r$ being the mesh grading parameter. The quantity $r=\frac{(4-\alpha)}{\alpha}$ is observed as the optimal grading parameter. Next, a high-order difference scheme for TFDE problem \eqref{diff_eqn}-\eqref{ini_eqn} is developed. The proposed scheme is proved to be unconditionally stable and convergent on the uniform mesh $(r=1)$. It is also found that the proposed scheme is uniquely solvable. At present, we are unable to provide proof for the stability of the scheme for $r>1$ due to the complexity of coefficients, but we are committed to making efforts in that direction. Numerical experiments are carried out to verify the theoretical results for smooth and non-smooth solutions. The results of the second example demonstrate clearly that the proposed scheme for a non-smooth solution on a uniform mesh converges with an order ($\alpha$), which is much lower than the order of convergence in the first example, i.e. ($4-\alpha$), where the solution was smooth. Also, our scheme for a non-smooth solution reflects better convergence on the graded mesh rather than the uniform mesh. This scheme outperforms all existing schemes for graded mesh in terms of accuracy and efficiency. The proposed scheme is developed for a general non-uniform mesh and can be applied and analyzed for any kind of grading over meshes.

\section*{Declarations}
\subsection*{\textbf{Funding and acknowledgements}}\noindent
The first author acknowledges the support provided by the Council of Scientific and Industrial Research, India, under grant number 09/086(1483)/2020-EMR-I. The fourth author acknowledges the support provided by the SERB, a statutory body of DST, India, under the award SERB--POWER  fellowship (grant  number SPF/2021/000103).
\subsection*{\textbf{Conflict of interest}}\noindent 
The authors declare that there are no financial and non-financial competing interests that are relevant to the content of this article.

\bibliographystyle{plain}
\bibliography{mybib}
\end{document}